\newcommand{\R}{\ensuremath{\mathbb{R}}}
\newcommand{\N}{\ensuremath{\mathbb{N}}}
\newcommand{\Q}{\ensuremath{\mathbb{Q}}}
\newcommand{\Z}{\ensuremath{\mathbb{Z}}}
\newcommand{\C}{\ensuremath{\mathbb{C}}}
\DeclareMathOperator{\dist}{\textnormal{dist}}
\DeclareMathOperator{\Tr}{Tr}
\newcommand{\eps}{\ensuremath{\varepsilon}}
\renewcommand{\P}{\ensuremath{\mathcal{P}}}
\DeclareMathOperator{\Res}{Res}
\DeclareMathOperator{\osc}{Osc}
\DeclareMathOperator{\argmax}{argmax}
\newcommand{\lf}{\ensuremath{\lfloor}}
\newcommand{\rf}{\ensuremath{\rfloor}}
\newtheorem{theorem}{Theorem}[section]
\newtheorem{lemma}[theorem]{Lemma}
\newtheorem{corollary}[theorem]{Corollary}
\newtheorem{conjecture}[theorem]{Conjecture}
\numberwithin{theorem}{section}
\numberwithin{definition}{section}
\theoremstyle{remark}
\newtheorem{remark}[theorem]{Remark}
\newcommand{\limplus}{{\mathchoice{\vcenter{\hbox{$\scriptstyle +$}}}
  {\vcenter{\hbox{$\scriptstyle +$}}}
  {\vcenter{\hbox{$\scriptscriptstyle +$}}}
  {\vcenter{\hbox{$\scriptscriptstyle +$}}}
}}
\newcommand{\limminus}{{\mathchoice{\vcenter{\hbox{$\scriptstyle -$}}}
  {\vcenter{\hbox{$\scriptstyle -$}}}
  {\vcenter{\hbox{$\scriptscriptstyle -$}}}
  {\vcenter{\hbox{$\scriptscriptstyle -$}}}
}}
\newcommand{\limpm}{{\mathchoice{\vcenter{\hbox{$\scriptstyle \pm$}}}
  {\vcenter{\hbox{$\scriptstyle \pm$}}}
  {\vcenter{\hbox{$\scriptscriptstyle \pm$}}}
  {\vcenter{\hbox{$\scriptscriptstyle \pm$}}}
}}
\begin{document}

\title[Maximizing Riesz means of anisotropic harmonic oscillators]{Maximizing Riesz means of anisotropic harmonic oscillators}

\author[S. Larson]{Simon Larson}
\address{\textnormal{(S. Larson)} Department of Mathematics, KTH Royal Institute of Technology, SE-100 44 Stockholm, Sweden}
\email{simla@math.kth.se}

\subjclass[2010]{Primary 35P15. Secondary 11P21, 52C05}
\keywords{Spectral optimization, harmonic oscillator, lattice point counting, asymptotics}

\begin{abstract}
  We consider problems related to the asymptotic minimization of eigenvalues of anisotropic harmonic oscillators in the plane. In particular we study Riesz means of the eigenvalues and the trace of the corresponding heat kernels. The eigenvalue minimization problem can be reformulated as a lattice point problem where one wishes to maximize the number of points of $(\mathbb{N}-\tfrac12)\times(\mathbb{N}-\tfrac12)$ inside triangles with vertices $(0, 0), (0, \lambda \sqrt{\beta})$ and $(\lambda/{\sqrt{\beta}}, 0)$ with respect to $\beta>0$, for fixed $\lambda\geq 0$. This lattice point formulation of the problem naturally leads to a family of generalized problems where one instead considers the shifted lattice $(\mathbb{N}+\sigma)\times(\mathbb{N}+\tau)$, for $\sigma, \tau >-1$. We show that the nature of these problems are rather different depending on the shift parameters, and in particular that the problem corresponding to harmonic oscillators, $\sigma=\tau=-\tfrac12$, is a critical case.
\end{abstract}

\maketitle


\section{Introduction and main result}
\label{sec:Intro}
For $\beta>0$, let $L_\beta$ denote the self-adjoint operator on $L^2(\R^2)$ acting as
\begin{equation}
  -\Delta+ \beta x^2+ \beta^{-1}y^2, 
\end{equation}
which we will refer to as the \emph{anisotropic harmonic oscillator}.
For any $\beta>0$ the spectrum of $L_\beta$ is positive and purely discrete, consisting of an infinite number of eigenvalues. Let $\{\lambda_k(\beta)\}_{k\in \N}$ denote the eigenvalues of $L_\beta$ numbered in increasing order and each repeated according to its multiplicity. Here and in what follows we use the convention that $\N=\{1, 2, \ldots\}$. It is well known that the eigenvalues have a one-to-one correspondence with $\N^2$, explicitly given by
\begin{equation}\label{eq:eigenvalues_osc}
    (k_1, k_2)\mapsto 2(k_1-1/2)\sqrt{\beta}+2(k_2-1/2)/{\sqrt{\beta}}=\lambda_{(k_1, k_2)}(\beta).
\end{equation}

In this paper we consider a number of problems related to the following question: Given $k\in \N$ for what values of $\beta$ is the minimum
\begin{equation}
  \min\{\lambda_k(\beta): \beta >0\}
\end{equation}
realized? In particular we are interested in how the set of minimizing $\beta$ behaves as $k$ tends to infinity.
Similar questions concerning minimizing or maximizing functions of the spectrum of differential operators has in recent years seen large interest, see for instance~\cite{MR3681143} and references therein.

\subsection{Minimizing eigenvalues and counting lattice points}
The problem of minimizing the $k$-th eigenvalue among the operators $L_\beta$ can be reformulated as finding the $\beta$ for which the \emph{eigenvalue counting function}, 
\begin{equation}\label{eq:eigenvalue_countingfunc}
  N(\beta, \lambda):= \#\{j\in \N: \lambda_j(\beta)\leq \lambda\}, 
\end{equation}
is first to reach $k$. Hence, if one understands the maximization problem
\begin{equation}\label{eq:maximization_prob}
  \max\{N(\beta, \lambda): \beta>0 \}
\end{equation}
for all $\lambda\geq 0$, then one also understands the problem of minimizing $\lambda_k(\beta)$ for any $k\in \N$. 

Due to the form of the eigenvalues of $L_\beta$ this maximization problem can be reformulated as a geometric lattice point problem: 
Given $\lambda \geq 0$ find the triangle, amongst those given by the vertices $(0, 0), (\lambda/{\sqrt{\beta}}, 0)$ and $(0, \sqrt{\beta}\lambda)$, which contains the greatest number of points of the lattice $(\N-\tfrac12)\times(\N - \tfrac12)$. (We have here rescaled the problem to avoid the factor $2$ appearing in the explicit form of the eigenvalues~\eqref{eq:eigenvalues_osc}.)

In a similar manner the problem of minimizing eigenvalues of the Dirichlet Laplacian among cuboids of unit measure, i.e.\ domains of the form $Q=(0, a_1)\times\ldots \times (0, a_d)\subset \R^d$ with $\prod_{i=1}^d a_i =1$, can be recast as finding which ellipsoid centered at the origin and of fixed volume contains the largest number of positive integer lattice points. In~\cite{MR3001382} Antunes and Freitas used this idea to show that if $Q_k$ is a sequence of unit area rectangles such that $Q_k$ minimizes $\lambda_k$ then $Q_k$ converges to the square as $k$ tends to infinity. In~\cite{MR3556370} a similar result was proven for the case of the Neumann Laplacian. The result of Antunes and Freitas was generalized to the three-dimensional case in~\cite{vdBergGittins}, and to arbitrary dimension in~\cite{GittinsLarson} where also the corresponding Neumann result was proven to hold in any dimension. 

Generalizing the work of Antunes and Freitas from the viewpoint of lattice point problems, Laugesen and Liu~\cite{LaugesenLiu1} studied the following problem: Let $f\colon [0, \infty)\to \R$ be a strictly decreasing concave function with $f(0)=1$ and $f(1)=0$. Define, for $s, r>0$, the function  
\begin{equation}\label{eq:Counter_under_graph}
  N(s, r):=\#\{(k_1, k_2)\in \N^2: k_2\leq r s f(k_1 s/r)\}.
\end{equation}
This function counts the number of integer lattice points under the graph of $f$ after it has been compressed in the $x$-direction by a factor $s$, stretched in the $y$-direction by the same factor, and scaled by a factor $r$.
What happens to the set of maximizers, $\argmax_{s>0} N(s, r)$, as $r$ (the area under the rescaled graph) tends to infinity? 
For a large family of functions $f$ they prove that the maximizing set of $s$ tends to~$1$. 
The corresponding problem with concave curves replaced by convex ones was treated in~\cite{AriturkLaugesen}. More recently Laugesen and Liu~\cite{LaugesenLiu2} have studied the case of both concave and convex curves where they also allow for shifting the lattice, i.e.\ replacing $\N^2$ by $(\N+\sigma)\times (\N+\tau)$. 
For work on similar problems in higher dimensions see also~\cite{Marshall, GuoWang}.

However, the results of~\cite{AriturkLaugesen, LaugesenLiu1, LaugesenLiu2} all require that the graph of the function $f$ has non-vanishing curvature. In particular, the case of $f(x)=1-x$ is not covered, which is precisely the problem of interest here. 
That the case of vanishing curvature is excluded from the results of~\cite{LaugesenLiu1, AriturkLaugesen, LaugesenLiu2, Marshall} is no accident, and also more classical problems in lattice point theory are less well understood in this setting~\cite{MR2310176, MR3309484}. In fact it was conjectured in~\cite{LaugesenLiu1} that the problem with $f(x)=1-x$ fails to have an asymptotic maximizer (see also~\cite{LaugesenLiu2} for the shifted case), and that instead the sequence of maximizing values of $s$ has an infinite number of limit points. In~\cite{MarshallSteinerberger} Marshall and Steinerberger prove the conjecture in the case of the non-shifted lattice $\N^2$.

\subsection{Maximizing Riesz means}
In what follows we will consider a family of problems closely related to the maximization problem in~\eqref{eq:maximization_prob}. The main problem that we are interested in is the behavior of $\beta$ which maximizes the function
\begin{equation}\label{eq:def_Reisz_mean}
  R^\gamma_{\sigma, \tau}(\beta, \lambda):= \sum_{k\in\N^2}(\lambda-(k_1+\sigma)\sqrt{\beta}-(k_2+\tau)/{\sqrt{\beta}})_\limplus^\gamma, 
\end{equation}
for $\gamma>0$ and $\sigma, \tau>-1$, as $\lambda$ tends to infinity (if $\sigma=\tau$ we will write simply $R^\gamma_\sigma$). Here and in what follows $x_\limpm:=(|x|\pm x)/2.$

Setting $\gamma=0$ and interpreting the sum appropriately,~\eqref{eq:def_Reisz_mean} reduces to the function
\begin{equation}\label{eq:count_as_Rmean}
  N_{\sigma, \tau}(\beta, \lambda):=\#\{(k_1, k_2)\in (\N+\sigma)\times (\N+\tau) : k_1 \sqrt{\beta}+k_2/{\sqrt{\beta}} \leq \lambda \}.
\end{equation}
If $\sigma=\tau=0$ then~\eqref{eq:count_as_Rmean} corresponds to the case considered in~\cite{LaugesenLiu1, MarshallSteinerberger}.
If $\sigma=\tau=-1/2$ then~\eqref{eq:count_as_Rmean} is the eigenvalue counting function~\eqref{eq:eigenvalue_countingfunc} evaluated at $2\lambda$. Similarly, for $\gamma>0$, $R^\gamma_{\limminus1/2}(\beta, \lambda)= \Tr(L_\beta-2\lambda)_\limminus^\gamma$ is the \emph{Riesz mean} of order $\gamma$ of $L_\beta$. Here we will adopt this name also for other $\sigma$ and $\tau$.

Taking $\gamma>0$ (instead of $\gamma=0$ as in the original problem) leads to a regularization of the problem and will allow us to use certain tools that are effectively excluded in the case of the counting function. Using the Aizenman--Lieb Identity~\cite{AizenmanLieb} the regularizing effect of increasing $\gamma$ becomes clear as it allows one to write $R^\gamma_{\sigma, \tau}$ as a weighted mean of lower order Riesz means: for $\gamma_2 >\gamma_1\geq 0$ and $\lambda \geq 0$,
\begin{equation}\label{eq:Aizenman_Lieb}
  R^{\gamma_2}_{\sigma, \tau}(\lambda) = B(1+\gamma_1, \gamma_2-\gamma_1)^{-1}\int_0^\infty \eta^{-1+\gamma_2-\gamma_1}R^{\gamma_1}_{\sigma, \tau}(\lambda-\eta)\, d\eta, 
\end{equation}
where $B$ denotes the Euler Beta function, and we as above interpret $R^0_{\sigma, \tau}$ as $N_{\sigma, \tau}$. This identity follows from linearity and the fact that
\begin{equation}
  \int_0^\infty \tau^{-1+\gamma_2-\gamma_2}(\tau-a)_\limplus^{\gamma_1}\, d\tau = a_\limplus^{\gamma_2} B(1+\gamma_1, \gamma_2-\gamma_1).
\end{equation}

We will also consider a further regularized problem which in the harmonic oscillator case corresponds to the \emph{trace of the heat kernel} of $L_\beta$, that is $\Tr(e^{-t L_\beta})$. For general shift parameters $\sigma, \tau$ we define
\begin{equation}\label{eq:Heat_trace_intro}
  H_{\sigma, \tau}(\beta, t) := \sum_{k\in \N^2} e^{-t((k_1+\sigma)\sqrt{\beta}+(k_2+\tau)/{\sqrt{\beta}})}.
\end{equation}
The problem of asymptotically maximizing this function in $\beta$ as $t\to 0^\limplus$ can in a certain sense be seen as a limiting version of the Riesz mean problems with $\lambda$ and $\gamma$ going to infinity simultaneously. A further connection to the Riesz means can be found by noticing that $H_{\sigma, \tau}$ can be written using the Laplace transform of the $R^\gamma_{\sigma, \tau}$:
\begin{equation}\label{eq:heat_trace_as_Laplace_transform}
  H_{\sigma, \tau}(\beta, t) = \frac{t^{1+\gamma}}{\Gamma(1+\gamma)}\int_0^\infty R^\gamma_{\sigma, \tau}(\beta, \lambda)e^{-\lambda t}\, d\lambda.
\end{equation}
This connection via the Laplace transform of $R^\gamma_{\sigma, \tau}$ and $H_{\sigma, \tau}$, combined with the fact that $H_{\sigma, \beta}$ can be explicitly computed, will be of importance when we study the behavior of the Riesz means for large $\lambda$ (following~\cite{MR1062904, MR1079775}). The main motivation for including the study of the heat kernel problem here is that it is easier to understand than the Riesz mean problem, and can thus serve as a guide to what we might expect when studying $R^\gamma_{\sigma, \tau}$.

\subsection{Main results and conjectures}
\label{sec:Main_results}

Throughout the paper $\beta^\gamma_{\sigma, \tau}(\lambda)$, for $\lambda\geq0, $ will denote a $\beta$ which maximizes $R^\gamma_{\sigma, \tau}(\, \cdot\, , \lambda)$, that is, satisfies
\begin{equation}
  R^\gamma_{\sigma, \tau}(\beta^\gamma_{\sigma, \tau}(\lambda), \lambda) = \max\{ R^\gamma_{\sigma, \tau}(\beta, \lambda): \beta >0\}.
\end{equation}
 As such a maximizer is not necessarily unique we emphasize that when we make a claim concerning $\beta^\gamma_{\sigma, \tau}(\lambda)$ we mean that this holds for \emph{all} maximizers. Similarly we let $\beta_{\sigma, \tau}^H(t)$, with $t>0$, denote a maximizer of $H_{\sigma, \tau}(\, \cdot\, , t)$, i.e.\ such that
 \begin{equation}
   \label{eq:MaximizerHeatTrace}
  H_{\sigma, \tau}(\beta^H_{\sigma, \tau}(t), t) = \max\{ H_{\sigma, \tau}(\beta, t): \beta >0\}.
 \end{equation}

We first turn to what we are able to prove for $\beta^H_{\sigma, \tau}(t)$.
The problem is made easier due to the fact that the sum~\eqref{eq:Heat_trace_intro} can be explicitly computed:
\begin{equation}\label{eq:Htrace_explicit_form}
  H_{\sigma, \tau}(\beta, t)= \sum_{k \in \N^2} e^{-t((k_1+\sigma)\sqrt{\beta}+(k_2+\tau)/{\sqrt{\beta}})} = \frac{e^{-t (\sigma\sqrt{\beta} +\tau/{\sqrt{\beta}})}}{(e^{t\sqrt{\beta}}-1)(e^{t/{\sqrt{\beta}}}-1)}.
\end{equation}
The question of maximizing with respect to $\beta$ is thus reduced to an explicit optimization problem in one variable. However, the behavior of this function depends strongly on the parameters $t, \sigma, \tau$ and carrying out the maximization explicitly is difficult.  

For $\beta_{\sigma, \tau}^H(t)$ there are two asymptotic regions that we wish to study: when $t\to 0^\limplus$ and when $t\to\infty$. The asymptotic problem $t\to 0^\limplus$ is most closely related to that studied for the Riesz means as more and more of the lattice points (eigenvalues) become relevant as $t$ becomes smaller, while if $t$ goes to $\infty$ the main contribution comes from the lattice points which are closest to the origin. Our first theorem tells us that we can determine the behavior of $\beta_{\sigma, \tau}^H(t)$ in both limits.

\begin{theorem}\label{thm:HeatKernel}
  For each $t>0$ and $\sigma, \tau>-1$ there exists a maximizing value $\beta^H_{\sigma, \tau}(t)$ satisfying~\eqref{eq:MaximizerHeatTrace}. If\/ $\max\{\sigma, \tau\}\geq -1/2$ then the maximizer is unique for each $t>0$, moreover, if $\sigma=\tau\geq -1/2$ then $\beta^H_{\sigma}(t)=1$.
  
  Furthermore, for all $\sigma, \tau > -1$, it holds that
  \begin{equation}
    \lim_{t\to \infty} \beta^H_{\sigma, \tau}(t) = \frac{1+\tau}{1+\sigma}, 
  \end{equation}
  similarly, for all $\sigma, \tau >-1/2$, 
  \begin{equation}
  \lim_{t\to 0^\limplus} \beta^H_{\sigma, \tau}(t)=\frac{1+2\tau}{1+2\sigma}.
  \end{equation}
  For all values of $\sigma, \tau>-1$ not covered above, any sequence of maximizers degenerates, i.e.~$\beta^H_{\sigma, \tau}(t)$ tends to $0$ or $\infty$ as $t\to 0^\limplus$.
\end{theorem}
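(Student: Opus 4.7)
The plan is to exploit a clean factorization of $H_{\sigma,\tau}$. Introducing $u = t\sqrt{\beta}$ and $v = t/\sqrt{\beta}$, so that $uv = t^2$, and setting $g_\sigma(x) := xe^{-\sigma x}/(e^x - 1)$, the closed form~\eqref{eq:Htrace_explicit_form} rearranges to $H_{\sigma,\tau}(\beta, t) = g_\sigma(u)\,g_\tau(v)/t^2$. Maximizing $H$ over $\beta$ is thus equivalent to maximizing $g_\sigma(u)\,g_\tau(v)$ along the hyperbola $uv = t^2$. Three elementary facts about $g_\sigma$ will drive the analysis: the boundary values $g_\sigma(0^+) = 1$ and $g_\sigma(x) \to 0$ as $x \to \infty$ (the latter using $\sigma > -1$); the Taylor expansion $g_\sigma(x) = 1 - (\sigma + \tfrac12)x + O(x^2)$ near $x = 0$; and the sharp bound $g_\sigma(x) \leq 1$ on $[0, \infty)$ whenever $\sigma \geq -1/2$, with strict inequality at $x > 0$. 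The bound follows from the identity $g_\sigma(x) = g_{-1/2}(x)\,e^{-(\sigma + 1/2)x}$ together with $g_{-1/2}(x) = x/(2\sinh(x/2))$ and $\sinh(x/2) \geq x/2$.

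\textbf{Existence, uniqueness, and symmetric case.} Existence follows from continuity of $H(\cdot, t)$ combined with $H \to 0$ at both endpoints of $(0, \infty)$, immediate from the closed form. For uniqueness when $\sigma, \tau \geq -1/2$, I would show that $w \mapsto \log H_{\sigma, \tau}(e^{2w}, t)$ is strictly concave on $\R$. Differentiating twice reduces this to establishing $\phi'(x) + \sigma > 0$ for $x > 0$, where $\phi(x) := x/(1 - e^{-x})$, and the key ingredient is the sharp inequality $\phi'(x) > 1/2$ on $(0, \infty)$. This inequality is equivalent to the positivity of $h(x) := 1 - 2xe^{-x} - e^{-2x}$ and follows from $h(0) = 0$ together with $h'(x) = 2e^{-x}(x - 1 + e^{-x}) > 0$ on $(0, \infty)$. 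The symmetric case $\sigma = \tau$ then follows from the involution $\beta \leftrightarrow 1/\beta$ of $H$: strict log-concavity forces the unique maximum at the fixed point $\beta = 1$.

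\textbf{Asymptotic limits.} For each limit I would take an arbitrary sequence of maximizers $\beta_n = \beta^H_{\sigma, \tau}(t_n)$ with associated $(u_n, v_n)$. For $t_n \to \infty$, the asymptotic $g_\sigma(x) \sim x e^{-(\sigma+1)x}$ combined with the lower bound $g_\sigma(u_\ast)\,g_\tau(v_\ast) \asymp t_n^2 \exp(-2t_n\sqrt{(1+\sigma)(1+\tau)})$ at the candidate $\beta_\ast = (1+\tau)/(1+\sigma)$ forces both $u_n, v_n \to \infty$; AM-GM on the exponent $(\sigma+1)u + (\tau+1)v$ subject to $uv = t_n^2$ then identifies the limit $\beta_n \to (1+\tau)/(1+\sigma)$. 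For $t_n \to 0^+$ with $\sigma, \tau > -1/2$, the bound $g_\sigma(x) \leq 1 - \delta$ on compact subsets of $(0, \infty)$ combined with $g_\sigma(u_\ast)\,g_\tau(v_\ast) \to 1$ at $\beta_\ast = (1+2\tau)/(1+2\sigma)$ forces $u_n, v_n \to 0$, and the Taylor expansion reduces matters to minimizing the linear loss $(\sigma + \tfrac12)u + (\tau + \tfrac12)v$ subject to $uv = t_n^2$. AM-GM then delivers $\beta_n \to (1+2\tau)/(1+2\sigma)$.

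\textbf{Degeneration and the main obstacle.} For parameters not covered above, if $\sigma < -1/2$ then $g_\sigma$ exceeds $1$ near the origin and attains its global maximum at some $x_\ast > 0$; maximizing $g_\sigma(u)\,g_\tau(v)$ then pins $u_n$ near $x_\ast$ while $v_n = t_n^2/u_n \to 0$, yielding $\beta^H_{\sigma, \tau}(t) \to \infty$ as $t \to 0^+$. The case $\tau < -1/2$ is symmetric, and the borderline $\sigma = -1/2$ with $\tau > -1/2$ requires going to second order, using $g_{-1/2}(x) = 1 - x^2/24 + O(x^3)$, which gives $\beta \sim t^{-2/3} \to \infty$. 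The main obstacle throughout the proof is ruling out that a maximizing sequence escapes to the boundary of $(0, \infty)$ along some intermediate scaling $\beta \sim t^{-\alpha}$; the factorization $Ht^2 = g_\sigma(u)\,g_\tau(v)$ together with the sharp inequality $g_\sigma \leq 1$ (for $\sigma \geq -1/2$) reduces all such comparisons to elementary inequalities and AM-GM, and is the central technical tool.
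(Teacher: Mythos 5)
Your approach is correct but genuinely different from the paper's, and I think it is cleaner. The key new ingredient is the factorization $t^2 H_{\sigma,\tau}(\beta,t)=g_\sigma(u)\,g_\tau(v)$ along the hyperbola $uv=t^2$, where $g_\sigma(x)=xe^{-\sigma x}/(e^x-1)$; this reduces every part of the theorem to elementary one-variable facts about $g_\sigma$. For uniqueness you establish strict concavity of $w\mapsto\log H(e^{2w},t)$ via $\phi'(x)>1/2$, which is a more structural argument than the paper's approach (the paper instead computes $\partial_x^2 H$ at stationary points, differentiates the derivative expression in $\tau$ to reduce to $\tau=-1/2$, and then performs a two-variable substitution to verify negativity). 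For the limits, the paper analyzes the stationary-point equation $\partial_x H=0$ directly and argues by contradiction to rule out degeneration as $t\to\infty$; your AM-GM comparison on the hyperbola, using the sharp bound $g_\sigma\leq 1$ for $\sigma\geq-1/2$ and the exponential tail $g_\sigma(x)\sim xe^{-(1+\sigma)x}$, is more transparent and makes the borderline nature of $\sigma=-1/2$ (where $g_{-1/2}$ is flat to first order) visible at a glance. Two small points worth tightening: for the $t\to 0^+$ limit you conclude $u_n,v_n\to 0$ from $g_\sigma\leq 1-\delta$ away from $0$, but to invoke the first-order Taylor expansion safely you also need the scale $u_n,v_n\asymp t_n$ (equivalently, $\beta_n$ bounded away from $0$ and $\infty$), which follows from the coercivity estimate $1-g_\sigma(x)\gtrsim \min\{x,1\}$ rather than just $1-g_\sigma\geq\delta$ on compacts; and in the degeneration discussion, when both $\sigma<-1/2$ and $\tau<-1/2$ it is not a priori clear which of $u_n,v_n$ gets pinned near its interior maximizer and which collapses to $0$ (this depends on whether $\max g_\sigma$ or $\max g_\tau$ is larger, and both scenarios can occur for different sequences), though in either case $\beta_n$ degenerates, which is all the theorem asserts. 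Both are easy to fix and do not affect the overall correctness of the argument.
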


\begin{remark}  
  One should note that the asymptotic maximizer in the limit $t\to \infty$ is precisely the $\beta$ which minimizes the area of the first triangle containing any lattice points at all. In the limit $t\to 0^\limplus$ we find the same limit as Laugesen--Liu~\cite{LaugesenLiu2} found for the counting function~\eqref{eq:Counter_under_graph}. This limit corresponds to balancing the area of the region below the bounding curve (in our case a line) to the left of the first column of lattice points, with that of the region below the bounding curve and below the first row of lattice points (see~\cite[Figure~1]{LaugesenLiu2}).
\end{remark}

In the same direction we prove the following for Riesz means: 
\begin{theorem}\label{thm:Mainconvtheorem}
  For all $\gamma>0$ and $\sigma, \tau >-1/2$ it holds that
  \begin{equation}
    \lim_{\lambda \to \infty} \beta^\gamma_{\sigma, \tau}(\lambda) = \frac{1+2\tau}{1+2\sigma}.
  \end{equation}
\end{theorem}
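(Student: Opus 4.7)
The overall strategy is to derive a two-term Weyl-type asymptotic
\begin{equation}
R^\gamma_{\sigma,\tau}(\beta, \lambda) = \frac{\lambda^{2+\gamma}}{(1+\gamma)(2+\gamma)} - \frac{c_{\sigma,\tau}(\beta)}{1+\gamma}\lambda^{1+\gamma} + o(\lambda^{1+\gamma}),
\end{equation}
where $c_{\sigma,\tau}(\beta) := (\sigma + \tfrac12)\sqrt{\beta} + (\tau + \tfrac12)/\sqrt{\beta}$, with the remainder uniform for $\beta$ in compact subsets of $(0,\infty)$. The hypothesis $\sigma, \tau > -\tfrac12$ makes both coefficients of $c_{\sigma,\tau}$ positive, so $c_{\sigma,\tau}$ is strictly convex in $\sqrt{\beta}$ with unique minimizer $\beta^\ast = (1 + 2\tau)/(1 + 2\sigma)$ and blows up at both ends of $(0,\infty)$. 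Once the uniform expansion is in hand and degeneration of maximizers is ruled out, strict convexity forces $\beta^\gamma_{\sigma,\tau}(\lambda) \to \beta^\ast$.

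To obtain the expansion I would combine the Laplace-transform identity \eqref{eq:heat_trace_as_Laplace_transform} with the closed form \eqref{eq:Htrace_explicit_form}. Using $(e^x - 1)^{-1} = 1/x - 1/2 + x/12 + O(x^3)$ and the cancellation $\sqrt{\beta}\cdot(1/\sqrt{\beta}) = 1$, a direct Taylor expansion yields
\begin{equation}
H_{\sigma,\tau}(\beta, t) = \frac{1}{t^2} - \frac{c_{\sigma,\tau}(\beta)}{t} + O(1) \qquad \text{as } t \to 0^+,
\end{equation}
with the $O(1)$ analytic in a $\beta$-dependent neighborhood of $0$. To transfer this to $R^\gamma$ I would follow the Mellin--Barnes contour approach of \cite{MR1062904, MR1079775}: after using Aizenman--Lieb \eqref{eq:Aizenman_Lieb} to pass to a sufficiently large $\gamma$, write
\begin{equation}
R^\gamma_{\sigma,\tau}(\beta, \lambda) = \frac{\Gamma(1+\gamma)}{2\pi i}\int_{c_0 - i\infty}^{c_0 + i\infty} e^{\lambda t}\, t^{-1-\gamma}\, H_{\sigma,\tau}(\beta, t)\,dt
\end{equation}
for any $c_0 > 0$, and shift the contour to the left past $t = 0$. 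The residue at $t = 0$ produces exactly the leading two terms of the expansion; the remaining integral, together with residues at the purely imaginary poles $t = 2\pi i n \sqrt{\beta}$ and $t = 2\pi i n/\sqrt{\beta}$ of $H_{\sigma,\tau}$, contributes an oscillatory remainder that is $o(\lambda^{1+\gamma})$ uniformly for $\beta$ in any compact subset of $(0,\infty)$ (this is what necessitates taking $\gamma$ large before applying Aizenman--Lieb to recover the general case).

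Finally I would rule out degeneration of the sequence of maximizers. A convenient universal upper bound is the monotone integral comparison
\begin{equation}
R^\gamma_{\sigma,\tau}(\beta, \lambda) \leq \iint_{x,y>0}(\lambda - (x+\sigma)\sqrt{\beta} - (y+\tau)/\sqrt{\beta})_\limplus^{\gamma}\,dx\,dy = \frac{(\lambda - \sigma\sqrt{\beta} - \tau/\sqrt{\beta})_\limplus^{2+\gamma}}{(1+\gamma)(2+\gamma)},
\end{equation}
which drives $R^\gamma(\beta, \lambda)$ strictly below $R^\gamma(\beta^\ast, \lambda)$ once $\sigma\sqrt{\beta} + \tau/\sqrt{\beta}$ is large, so long as $\sigma, \tau > 0$. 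In the remaining ranges $\sigma \in (-\tfrac12, 0]$ or $\tau \in (-\tfrac12, 0]$ a refinement is needed: one may split $R^\gamma$ along the $k_1$-direction into explicitly computable one-dimensional Riesz means and bound each column separately, using the strict positivity of $1 + \sigma$ and $1 + \tau$ to exhibit a second-order gap. Combined with the uniform expansion on compacta, this forces $c_{\sigma,\tau}(\beta^\gamma_{\sigma,\tau}(\lambda)) \to c_{\sigma,\tau}(\beta^\ast)$ and hence $\beta^\gamma_{\sigma,\tau}(\lambda) \to \beta^\ast$ by strict convexity. The hardest part I anticipate is making the non-degeneracy robust as $\sigma$ or $\tau$ approaches $-\tfrac12$, where $c_{\sigma,\tau}(\beta^\ast)$ becomes small and leaves only a thin window for the argument to exploit — which is precisely where, per the abstract's discussion, the problem turns critical and the theorem ceases to apply.
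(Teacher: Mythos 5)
Your overall architecture is the same as the paper's: a two-term uniform expansion $R^\gamma_{\sigma,\tau}(\beta,\lambda) = \frac{\lambda^{2+\gamma}}{(1+\gamma)(2+\gamma)} - \frac{c_{\sigma,\tau}(\beta)}{1+\gamma}\lambda^{1+\gamma} + o(\lambda^{1+\gamma})$ on compacta, combined with a separate argument ruling out degeneration, and your identification of $c_{\sigma,\tau}(\beta)$ matches $-(1+\gamma)\alpha_1$ from \eqref{eq:low_order_coefficients}. However, there are two genuine gaps.

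First, the Aizenman--Lieb identity \eqref{eq:Aizenman_Lieb} expresses $R^{\gamma_2}$ as an integral average of $R^{\gamma_1}$ for $\gamma_2 > \gamma_1$; it smooths and hence transfers information from \emph{lower} to \emph{higher} order, never the reverse. Your plan to ``take $\gamma$ large before applying Aizenman--Lieb to recover the general case'' therefore runs the identity backwards — you cannot deduce the expansion for small $\gamma$ from that for large $\gamma$ this way. What is actually needed (and what the paper does) is to run the contour argument directly for each $\gamma > 0$, handling the failure of absolute convergence of the residue sum for irrational $\beta$ by integrating over a $\lambda$-dependent truncated contour rather than the full vertical line, and then proving $\osc(\beta,\lambda) = O(\lambda)$ uniformly on compacta by a separate elementary comparison.

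Second, your integral upper bound $R^\gamma_{\sigma,\tau}(\beta,\lambda) \leq \frac{(\lambda - \sigma\sqrt\beta - \tau/\sqrt\beta)_\limplus^{2+\gamma}}{(1+\gamma)(2+\gamma)}$ only rules out degeneration when both $\sigma, \tau > 0$ (so that $\sigma\sqrt\beta + \tau/\sqrt\beta \to \infty$ as $\beta$ leaves compacta). You correctly recognize that a refinement is required for $\sigma$ or $\tau$ in $(-\tfrac12, 0]$, but your sketch (``split along the $k_1$-direction and bound each column'') omits the decisive ingredient: a sharp three-term bound on the one-dimensional sums of the form $\sum_{k\geq 1}(\mu - (k+\sigma)\sqrt\beta)_\limplus \leq \frac{\mu^2}{2\sqrt\beta} - c_1 b\mu + c_2 b^2\sqrt\beta$, valid for all $\sigma > -\tfrac12$. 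It is precisely this negative middle term — which, after combining the $k_1$- and $k_2$-splits, produces the $-c_1 b\,\frac{1+\beta}{\sqrt\beta}\lambda$ correction in Lemma~\ref{lem:bound_N} — that forces $\frac{1+\beta}{\sqrt\beta}$ to stay bounded. Without it, the column-by-column split gives you back only the leading Weyl term and nothing to exploit. Your closing worry that the window becomes ``thin'' near $\sigma = \tau = -\tfrac12$ is a red herring for this theorem: for any fixed $\sigma, \tau > -\tfrac12$ the constants $c_1, c_2, b_0$ are fixed positive numbers, and the argument closes cleanly; they only degenerate at the critical boundary, which this theorem excludes.
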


That is, for all shifts $\sigma, \tau>-1/2$ any sequence of maximizers, with $\lambda \to \infty$, for positive order Riesz means has a unique limit. Thus the behavior observed in~\cite{LaugesenLiu1} and studied in~\cite{MarshallSteinerberger} for the counting function with $\sigma=\tau=0$ effectively vanishes as soon as we consider the regularized problem of Riesz means with $\gamma>0$. 

In the case of the harmonic oscillators, $\sigma=\tau=-1/2$, we find a unique limit first when $\gamma>1$. Specifically we prove that:
\begin{theorem}\label{thm:Mainconvtheorem_oscillators}
  For all $\gamma>1$ it holds that
  \begin{equation}
    \lim_{\lambda \to \infty} \beta^\gamma_{\limminus1/2}(\lambda) = 1.
  \end{equation}
\end{theorem}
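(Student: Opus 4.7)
The strategy is to reduce the theorem to a sufficiently sharp two-term asymptotic expansion of $R^\gamma_{\limminus 1/2}(\beta,\lambda)$ as $\lambda \to \infty$: the leading Weyl-type term turns out to be independent of $\beta$, while the first correction is a negative multiple of $(\beta+\beta^{-1})\lambda^\gamma$, which is uniquely maximised at $\beta=1$ since $x+x^{-1}\geq 2$ with equality iff $x=1$. The point of departure is the explicit formula~\eqref{eq:Htrace_explicit_form}: for $\sigma=\tau=-1/2$,
\[
  H_{\limminus 1/2}(\beta,t) = \frac{1}{4\sinh(t\sqrt{\beta}/2)\sinh(t/(2\sqrt{\beta}))} = \frac{1}{t^{2}} - \frac{\beta+\beta^{-1}}{24} + O(t^{2})
\]
as $t \to 0^\limplus$. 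Crucially, the $1/t$ coefficient---which is present for generic shifts $(\sigma,\tau)$ and which governs the limit in Theorem~\ref{thm:Mainconvtheorem}---vanishes identically here because of the reflection symmetry of $\sinh$, forcing us one order deeper in the expansion.

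Via the Laplace-transform identity~\eqref{eq:heat_trace_as_Laplace_transform}, the above expansion of $H_{\limminus 1/2}$ should translate, using Karamata-type or contour-integral methods of the kind developed in~\cite{MR1062904,MR1079775}, into
\[
  R^\gamma_{\limminus 1/2}(\beta,\lambda) = \frac{\lambda^{\gamma+2}}{(\gamma+1)(\gamma+2)} - \frac{\beta+\beta^{-1}}{24}\lambda^\gamma + o(\lambda^\gamma), \qquad \lambda\to\infty,
\]
uniformly for $\beta$ in compact subsets of $(0,\infty)$. Given such an expansion, the theorem follows quickly: for any maximiser $\beta^{*}(\lambda) := \beta^\gamma_{\limminus 1/2}(\lambda)$ one has $R^\gamma_{\limminus 1/2}(\beta^{*}(\lambda),\lambda) \geq R^\gamma_{\limminus 1/2}(1,\lambda)$, and substituting the expansion on both sides cancels the leading term to produce
\[
  \frac{\beta^{*}(\lambda) + \beta^{*}(\lambda)^{-1} - 2}{24}\lambda^\gamma \leq o(\lambda^\gamma),
\]
forcing $\beta^{*}(\lambda) + \beta^{*}(\lambda)^{-1} \to 2$ and therefore $\beta^{*}(\lambda)\to 1$. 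Escape of $\beta^{*}$ to $0$ or $\infty$ is ruled out using the symmetry $R^\gamma_{\limminus 1/2}(\beta,\lambda) = R^\gamma_{\limminus 1/2}(\beta^{-1},\lambda)$ (swap $k_1 \leftrightarrow k_2$ in~\eqref{eq:def_Reisz_mean}) together with the trivial vanishing $R^\gamma_{\limminus 1/2}(\beta,\lambda) = 0$ once $\sqrt{\beta}/2 + 1/(2\sqrt{\beta}) > \lambda$, and a direct sum comparison for $\beta$ in the intermediate range.

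The principal obstacle is the justification of the asymptotic with a genuine $o(\lambda^\gamma)$ remainder, and this is exactly where the hypothesis $\gamma>1$ enters. The counting function $N_{\limminus 1/2}(\beta,\lambda)$ oscillates around its Weyl value $\lambda^{2}/2$ with fluctuations of order $\lambda$ (visible already at $\beta=1$, where $N_{\limminus 1/2}(1,\lambda) = \lfloor\lambda+1\rfloor\lfloor\lambda\rfloor/2$), so the formal expansion is only valid after sufficient averaging. Passing from $N$ to $R^\gamma$ convolves the counting function against the weight $(\lambda-\mu)^{\gamma-1}$, and only for $\gamma>1$ is this smoothing strong enough to reduce the oscillations below the $\lambda^\gamma$ scale; for $\gamma\leq 1$ they remain of the same order as the correction we wish to identify, matching the conjectural non-convergent picture flagged in the introduction. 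Concretely, one shifts the contour in the inverse Laplace integral $\frac{\Gamma(1+\gamma)}{2\pi i}\int_{c-i\infty}^{c+i\infty} H_{\limminus 1/2}(\beta,t)\,e^{\lambda t}\,t^{-1-\gamma}\,dt$ past $t=0$ to capture the two polynomial terms as residues there, and then controls the oscillatory contributions arising from the imaginary-axis poles $t = 2\pi i n/\sqrt{\beta}$ and $t = 2\pi i n\sqrt{\beta}$ by the decay factor $|t|^{-1-\gamma}$; maintaining uniformity in $\beta$ over the relevant compact range is the final technical ingredient.
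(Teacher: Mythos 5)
Your overall strategy — expand $R^\gamma_{\limminus 1/2}$ to the two-term Weyl form, observe that the $\lambda^{1+\gamma}$ coefficient $\alpha_1$ vanishes for the symmetric shift, compare $R^\gamma_{\limminus 1/2}(\beta^*(\lambda),\lambda)\geq R^\gamma_{\limminus 1/2}(1,\lambda)$, and exploit the fact that $-\frac{\beta+\beta^{-1}}{24}\lambda^\gamma$ is uniquely maximised at $\beta=1$ — is exactly what the paper does in Section~\ref{sec:Proof_main_thms}. Your derivation of $\alpha_2 = -\frac{1+\beta^2}{24\beta}$ via the $\sinh$-expansion of $H_{\limminus 1/2}$ is correct and matches~\eqref{eq:low_order_coefficients}, and your identification of $\gamma>1$ as the threshold below which the $O(\lambda)$ oscillations drown out the $\lambda^\gamma$ correction is the right heuristic.

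The genuine gap is the step that rules out degeneration of the maximiser. Theorem~\ref{thm:asymptotics} only gives the asymptotic with a uniform remainder for $\beta$ in \emph{compact} subsets of $\R_\limplus$, so before you may substitute the expansion for $R^\gamma_{\limminus 1/2}(\beta^*(\lambda),\lambda)$ you must first establish that the sequence of maximisers stays inside some fixed compact interval. You sketch this with the symmetry $\beta\mapsto\beta^{-1}$, the trivial vanishing for $\beta\gtrsim\lambda^2$, and a ``direct sum comparison for $\beta$ in the intermediate range,'' but the last of these is precisely the hard, unwritten part: you still have to exclude maximisers with, say, $\beta\sim\lambda^{1/2}$, where the expansion is not uniform and the trivial vanishing does not apply. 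The paper handles this by proving a three-term upper bound that holds for \emph{all} $\beta>0$ and $\lambda\geq 0$ — Lemma~\ref{lem:2D_osc_bound} combined with the Aizenman--Lieb identity to get Corollary~\ref{cor:three_term_bounds_gamma}(ii):
\begin{equation}
  R^\gamma_{\limminus1/2}(\beta,\lambda)\leq \frac{\lambda^{2+\gamma}}{(1+\gamma)(2+\gamma)}-c_1 b\,\frac{1+\beta^2}{\beta}\lambda^\gamma+c_2 b^{3/2}\frac{1+\beta^3}{\beta^{3/2}}\lambda^{\gamma-1}.
\end{equation}
Plugging the expansion of $R^\gamma_{\limminus1/2}(1,\lambda)$ into the left side of $R^\gamma_{\limminus1/2}(1,\lambda)\leq R^\gamma_{\limminus1/2}(\beta^*,\lambda)$ and the uniform bound into the right side forces $\frac{1+(\beta^*)^2}{\beta^*}$ to be bounded, which is the missing compactness. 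This uniform bound is the substantive content of Lemma~\ref{lem:1D_bounds}(iii) and Lemma~\ref{lem:2D_osc_bound}, proved via an explicit optimisation over the fractional part of $\lambda/\sqrt{\beta}$; it cannot be replaced by the symmetry and the trivial-vanishing observation alone. Once that bound is in hand, your remaining argument goes through essentially verbatim.
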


We do not believe that the failure to prove the corresponding result for smaller $\gamma$ is a result of our methods, but that in these cases the behavior of the maximizers resembles that in~\cite{MarshallSteinerberger}. In fact, for the cases that are not covered by the above we conjecture the following, which extends the conjecture of Laugesen and Liu~\cite{LaugesenLiu1}:

\begin{conjecture}\label{conj:conjectured_limits}
  The conjecture is split into two parts: 
  \begin{enumerate}[label=(\roman*)]
    \item\label{itm:conj_1} For all $\sigma, \tau >-1/2$ the set
    \begin{equation}
      \bigcap_{\lambda >0}\overline{\bigcup_{\lambda'> \lambda}\underset{\beta>0}{\argmax}\; N_{\sigma, \tau}(\beta, \lambda')}
    \end{equation}
    is infinite.

    \item\label{itm:conj_2} For all $0\leq \gamma \leq 1$ the set
    \begin{equation}
      \bigcap_{\lambda >0}\overline{\bigcup_{\lambda'> \lambda}\underset{\beta>0}{\argmax}\; R^\gamma_{\limminus1/2}(\beta, \lambda')}
    \end{equation}
    is infinite.
  \end{enumerate}
\end{conjecture}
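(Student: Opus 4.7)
The plan is to extend the Marshall--Steinerberger strategy from the case $\sigma=\tau=0$, $\gamma=0$ to general shifts in part~\ref{itm:conj_1} and to low-order Riesz means in the harmonic-oscillator case in part~\ref{itm:conj_2}. The unifying idea is to split
\begin{equation}
  R^\gamma_{\sigma,\tau}(\beta,\lambda)=P^\gamma_{\sigma,\tau}(\beta,\lambda)+E^\gamma_{\sigma,\tau}(\beta,\lambda)
\end{equation}
into a \emph{smooth part} $P^\gamma$, obtained via Euler--Maclaurin / Mellin--Barnes applied to~\eqref{eq:heat_trace_as_Laplace_transform} together with~\eqref{eq:Htrace_explicit_form}, and an \emph{arithmetic remainder} $E^\gamma$ picked up by the nonzero frequencies in a Poisson summation expansion. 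The smooth part is maximized at a unique $\beta^\ast$ (equal to $(1+2\tau)/(1+2\sigma)$, resp.\ $1$) with a quadratic-type gap of size $\lambda^{\gamma+1/2}$ relative to nearby competitors, matching the amplitude of $E^\gamma$ in the regime $\gamma\leq 1$; this is exactly the reason Theorem~\ref{thm:Mainconvtheorem_oscillators} breaks down at $\gamma=1$.

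The first step is therefore to establish sharp asymptotic expansions of this form, computing $P^\gamma_{\sigma,\tau}$ explicitly (for $\sigma=\tau=-1/2$ one expects the first several terms to collapse, leaving a leading smooth competition of order exactly $\lambda^{\gamma+1/2}$). The second step is to analyze $E^\gamma_{\sigma,\tau}$ through its Poisson expansion, which for the triangular domain leads to exponential sums of the shape
\begin{equation}
  \sum_{(m,n)\neq(0,0)}\frac{e^{2\pi i\lambda\,\phi_{m,n}(\beta)}\,c_{m,n}(\sigma,\tau)}{(m^2\beta+n^2/\beta)^{(\gamma+3)/2}},
\end{equation}
where the phases $\phi_{m,n}(\beta)$ reflect the slope $-\beta$ of the hypotenuse. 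The key analytic input is to show that the amplitude of these oscillations is of order $\lambda^{\gamma+1/2}$ and that, at suitable $\beta$ and $\lambda$, many terms add constructively.

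The third step is the construction. Fix a target limit point $\beta_0>0$ and approximate it by rationals $\beta_n=p_n/q_n\to\beta_0$ chosen from the continued-fraction convergents of $\sqrt{\beta_0}$. For such $\beta_n$ the exponential sum $E^\gamma_{\sigma,\tau}(\beta_n,\cdot)$ is almost periodic in $\lambda$ with a dominant frequency and, along a positive-density sequence $\lambda_{n,k}\to\infty$, attains values $\geq c\lambda_{n,k}^{\gamma+1/2}$ with $c>0$ independent of $n$; by the same lattice-point mechanism used in~\cite{MarshallSteinerberger} and the results of~\cite{MR2310176, MR3309484}, the analogous quantity at $\beta^\ast$ can simultaneously be made small. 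Since the smooth disadvantage $P^\gamma_{\sigma,\tau}(\beta^\ast,\lambda_{n,k})-P^\gamma_{\sigma,\tau}(\beta_n,\lambda_{n,k})$ is also $O(\lambda_{n,k}^{\gamma+1/2})$ and can be made smaller than the oscillation by choosing $\beta_n$ sufficiently close to $\beta^\ast$, this forces the maximizer to lie near $\beta_n$. Letting $\beta_0$ range over a dense set of positive reals (for part~\ref{itm:conj_1}, respectively over rationals near $1$ for part~\ref{itm:conj_2}) produces infinitely many limit points.

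The main obstacle is the second step: obtaining a sharp two-sided control of $E^\gamma_{\sigma,\tau}$ that is uniform in $\beta$. The triangular, polygonal geometry means the boundary has vanishing curvature, so the usual van der Corput estimates break down and the problem is genuinely number-theoretic; moreover, one needs constructive lower bounds on the oscillation (not merely upper bounds) and these must survive the regularization induced by $\gamma>0$ in part~\ref{itm:conj_2}. This is precisely the point at which the Marshall--Steinerberger argument uses delicate Diophantine input, and extending it past $\gamma=0$ will likely require an Aizenman--Lieb type reduction~\eqref{eq:Aizenman_Lieb} combined with averaging of the $\gamma=0$ oscillations against $\eta^{-1+\gamma}\,d\eta$, showing that enough constructive interference survives for $\gamma\leq 1$ while destructive averaging takes over for $\gamma>1$.
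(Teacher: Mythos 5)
Note first that the statement you address is labelled a \emph{Conjecture} in the paper: the author does not prove it, and explicitly records that only the single case $\gamma=\sigma=\tau=0$ has been settled, by Marshall--Steinerberger~\cite{MarshallSteinerberger}. There is therefore no proof in the paper to compare against --- only the heuristic discussion after Theorem~\ref{thm:asymptotics}, which identifies the competition between the sub-leading smooth term and the oscillatory term $\osc(\beta,\lambda)$ as the relevant mechanism, and the observation via~\eqref{eq:periodic limit} that the renormalized quantity converges to a periodic function of $\lambda$. Your proposal has the right overall shape (Marshall--Steinerberger machinery plus Diophantine approximation) but is, as you acknowledge, a research outline rather than a proof, and it contains quantitative errors that would derail the construction.

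The central one concerns orders of magnitude. For part~\ref{itm:conj_2}, $\sigma=\tau=-1/2$, Theorem~\ref{thm:asymptotics} and~\eqref{eq:low_order_coefficients} give $\alpha_1\equiv 0$ and $\alpha_2=-(1+\beta^2)/(24\beta)$, so the $\beta$-dependent smooth competition is of order $\lambda^{\gamma}$, while $\osc(\beta,\lambda)=O(\lambda)$; the threshold $\gamma=1$ is precisely the exponent at which these two orders coincide, which is why Theorem~\ref{thm:Mainconvtheorem_oscillators} requires $\gamma>1$. Your claim that both the smooth gap and the oscillation have amplitude $\lambda^{\gamma+1/2}$ would erase the special role of $\gamma=1$ entirely (and for part~\ref{itm:conj_1}, with $\gamma=0$, it would predict a smooth gap of size $\lambda^{1/2}$, whereas it is actually of size $\lambda$, matching $\osc$). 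Relatedly, the Poisson-type dual sum you write, with denominators $(m^2\beta+n^2/\beta)^{(\gamma+3)/2}$ over a two-dimensional dual lattice, is the structure appropriate to boundaries with non-vanishing curvature; for the triangle the stationary-phase contribution from the flat hypotenuse degenerates, and the correct expansion --- given explicitly in Theorem~\ref{thm:asymptotics} --- is a pair of \emph{one}-dimensional sums $\sum_k k^{-1-\gamma}\sin(\dots)/\sin(\pi k\beta)$ with small divisors $\sin(\pi k\beta)$. It is exactly this small-divisor structure, not a curved-domain Poisson sum, that makes the problem Diophantine and that any extension of~\cite{MarshallSteinerberger} must control from below. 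Your final paragraph correctly names this --- two-sided, constructive, $\beta$-uniform control of $\osc$ compatible with the Aizenman--Lieb averaging~\eqref{eq:Aizenman_Lieb} --- as the real difficulty; but the preceding numerology needs to be corrected to the $\lambda^{\gamma}$ versus $\lambda$ competition before the construction in step three can be made to close.
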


As mentioned earlier the case $\gamma=\sigma=\tau=0$ was recently settled by Marshall and Steinerberger~\cite{MarshallSteinerberger}.

\subsection{Idea of proof}
The conjecture, as well as the proof of Theorems~\ref{thm:Mainconvtheorem} and~\ref{thm:Mainconvtheorem_oscillators}, is based on precise asymptotic expansions of $R^\gamma_{\sigma, \tau}(\beta, \lambda)$ as $\lambda \to \infty$. In~\cite{MR1061661, MR1079775} the authors study the asymptotic behavior of $R_{\limminus1/2}(1, \lambda/2)=\Tr((-\Delta+|x|^2)-\lambda)_\limminus^\gamma$ in connection to Lieb--Thirring inequalities (see also~\cite{MR1686426, MR1747896}). The calculations carried out there transfer without much change to what we study here, see Section~\ref{sec:Proof_asymptotics}.

Let $\zeta\colon \C\times \C \to \C$ denote the Hurwitz $\zeta$-function. In the special case $\zeta(z, 1)$ this is the Riemann $\zeta$-function which we denote simply by $\zeta(z)$~\cite[Chapter~25]{NIST}. Let also $\{x\}$ denote the fractional part of $x\in \R$, i.e.\ $\{x\}=x -\lfloor x\rfloor$. 

\begin{theorem}\label{thm:asymptotics}
  For any $\gamma>0$, $M\in \N$, $\delta >0$, $\beta \in \R_\limplus$ and $\sigma, \tau > -1$, there are constants $\alpha_k=\alpha_k(\beta, \sigma, \tau, \gamma)$ such that
  \begin{align}
  R^\gamma_{\sigma, \tau}(\beta, \lambda) 
     &=
     \sum_{k=0}^{M+1} \alpha_{k}\lambda^{2-k+\gamma}
     +
     \osc(\beta, \lambda) + o(\lambda^{-M+\gamma + \delta}), 
  \end{align} 
  as $\lambda \to \infty$. 
  The coefficients $\alpha_k$ are continuous in $\beta$ and\/ $|{\osc(\beta, \lambda)}|\leq C_\beta(\lambda+1)$. Moreover, $C_\beta$ and the implicit constant of the remainder term are uniformly bounded for $\beta$ in compact subsets of\/ $\R_\limplus$. 

\begin{enumerate}[label=\textup{(}\roman*\textup{)}]
  \item\label{thm:Asymptotics_part_I_rational} If $\beta=\tfrac\mu\nu \in \Q_\limplus$, $\gcd(\mu, \nu)=1$, then, with $x=\sqrt{\mu\nu}\lambda-\mu\sigma-\nu\tau$, 
  \begin{align}
    \hspace{12pt}\osc(\beta, \lambda)
    &= 
    \frac{\zeta(-\gamma, \{x\})}{(\mu\nu)^{\frac{1+\gamma}{2}}}\lambda
    - \frac{\zeta(-1-\gamma, \{x\})}{(\mu\nu)^{1+\frac\gamma2}}
     - \frac{(1+2\sigma)\mu+(1+2\tau)\nu}
     {2(\mu\nu)^{1+\frac{\gamma}{2}}}
     \zeta(-\gamma, \{x\})\\
    &\quad+
     \frac{\nu^{\gamma/2}\Gamma(1+\gamma)}{\mu^{\gamma/2}(2\pi)^{1+\gamma}}
     \sum_{\substack{k\in \N\\ k/\nu \notin \N}}
    \frac{\sin(\pi k (2x-\mu)/\nu- \frac{\pi}{2}(1+\gamma))}{k^{1+\gamma}\sin(\pi k \tfrac\mu\nu)} \\
  &\quad +
   \frac{\mu^{\gamma/2}\Gamma(1+\gamma)}{\nu^{\gamma/2}(2\pi)^{1+\gamma}}
   \sum_{\substack{k\in \N\\ k/\mu \notin \N}}
   \frac{\sin(\pi k (2x-\nu)/\mu- \frac{\pi}{2}(1+\gamma))}{k^{1+\gamma}\sin(\pi k\tfrac\nu\mu)}.
  \end{align}

  \item\label{thm:Asymptotics_part_II_irrational}  If $\beta\in \R_\limplus{\setminus}\Q$, it holds that
  \begin{align}
     \hspace{26pt}\osc(\beta, \lambda) 
     &=
     \frac{\beta^{-\gamma/2}\Gamma(1+\gamma)}{(2\pi)^{1+\gamma}}
     \sum_{k=1}^{\Lambda(\lambda)/{\sqrt{\beta}}}
  \frac{\sin(\pi k (2\lambda \sqrt{\beta}-(1+2\sigma) \beta-2\tau)- \frac{\pi}{2}(1+\gamma))}{k^{1+\gamma}\sin(\pi k \beta)} \\
  &\quad +
   \frac{\beta^{\gamma/2}\Gamma(1+\gamma)}{(2\pi)^{1+\gamma}}
   \sum_{k=1}^{\Lambda(\lambda)\sqrt{\beta}}
   \frac{\sin(\pi k (2\lambda/{\sqrt{\beta}}-2\sigma-(1+2\tau)/\beta)- \frac{\pi}{2}(1+\gamma))}{k^{1+\gamma}\sin(\pi k/\beta)} \\
   &\quad + o(\lambda^{-M+\gamma + \delta}), 
  \end{align}
   where $\Lambda(\lambda)=O(\lambda^{\frac{M+2-\gamma}{\gamma}})$.
 \end{enumerate}
\end{theorem}

\begin{remark}
  A couple of remarks are in order:
  \begin{enumerate}
    \item If $\gamma\in \N$ then $\alpha_k=0$ for all $k>2+\gamma$.
    \item We emphasize that the amplitude of the oscillatory term $\osc(\beta, \lambda)$ grows at most linearly in $\lambda$ independently of the values of $\gamma$ and $\beta$:
    \begin{itemize}
    \item In the rational case~\ref{thm:Asymptotics_part_I_rational} the only term of $\osc(\beta, \lambda)$ that is not bounded is the first one,
    \begin{equation}
        \osc\bigl(\tfrac{\mu}{\nu}, \lambda\bigr) = \frac{\zeta(-\gamma, \{x\})}{(\mu\nu)^{\frac{1+\gamma}{2}}}\lambda +  O(1), \quad \mbox{as }\lambda \to \infty.
      \end{equation}
    \item In the irrational case~\ref{thm:Asymptotics_part_II_irrational} we believe that $\osc(\beta, \lambda)=o(\lambda)$. For $\gamma=0$ it follows that this is the case from the results in~\cite{MarshallSteinerberger}, but we are currently unable to prove this when $\gamma>0$. Whether or not this statement is true will be of little importance in what follows, but if one aims to prove (or disprove) Conjecture~\ref{conj:conjectured_limits} it would most likely be necessary to understand $\osc(\beta, \lambda)$ in greater detail. 
    \end{itemize}
  \end{enumerate}
\end{remark}

For an explicit formula for the coefficients $\alpha_k$ see~\eqref{eq:alpha_coefficients}. For our purposes it will only be important that
\begin{align}\label{eq:low_order_coefficients}
  \alpha_{0} &= \frac{1}{(1+\gamma)(2+\gamma)}, 
  \qquad 
  \alpha_1 = -\frac{(1+2\sigma)\sqrt{\beta}+(1+2\tau)/{\sqrt{\beta}}}{2(1+\gamma)},\\
  \alpha_{2} &= 
  \frac{(1+2\sigma)(1+2\tau)}{4}+ \frac{(1+6\sigma(1+\sigma))\beta+(1+6\tau(1+\tau))/\beta}{12}.
\end{align}
The $\alpha_2$ term will only be important in the case $\sigma=\tau=-1/2$, in which case $\alpha_1=0$ and $\alpha_2= -\frac{1+\beta^2}{24\beta}$.

Heuristically, Theorem~\ref{thm:asymptotics} suggests that Theorems~\ref{thm:Mainconvtheorem},~\ref{thm:Mainconvtheorem_oscillators} and Conjecture~\ref{conj:conjectured_limits} should be true. Essentially, since the first order term is independent of $\beta$ it is reasonable to conjecture that to asymptotically maximize $R_{\sigma, \tau}^\gamma$ one would want to choose $\beta$ to maximize the next order term. The cases where we can prove that an asymptotic maximizer exists is precisely those where: 
\begin{enumerate}[label=(\roman*)]
\item the subleading polynomial term is asymptotically much larger than $\osc(\beta, \lambda)$, and
\item the coefficient of this term is maximized at some $\beta\in \R_\limplus$.
\end{enumerate}
In the harmonic oscillator case, when $\alpha_1=0$, this means that the third term $\alpha_2\lambda^\gamma$ needs to be superlinear, and hence $\gamma>1$.

For the combinations of $\sigma, \tau$ and $\gamma$ in Conjecture~\ref{conj:conjectured_limits} the oscillatory parts of the expansion are of greater importance. It is suitable to consider the renormalized quantity
\begin{equation}\label{eq:periodic limit}
  \frac{R_{\sigma, \tau}^\gamma(\beta, \lambda) - \alpha_0\lambda^{2+\gamma}}{\lambda}.
\end{equation}
If $\sigma, \tau$ and $\gamma$ are as in Conjecture~\ref{conj:conjectured_limits} then in the limit $\lambda\to \infty$~\eqref{eq:periodic limit} converges to a function which is periodic in $\lambda$ and whose period and amplitude depend on $\beta$ (for $\gamma>0$ this follows from Theorem~\ref{thm:asymptotics} and for $\gamma=0$ from~\cite[Lemmas~4 and~5]{MarshallSteinerberger} by a change of variables). It is not unreasonable to believe that one can align these periods to construct a large set of limit points for~$\beta_{\sigma, \tau}^\gamma(\lambda).$ In fact, this is the underlying idea in Marshall and Steinerberger's proof of the conjecture in the the case $\sigma=\tau=\gamma=0$~\cite{MarshallSteinerberger},

From Theorem~\ref{thm:asymptotics} it is not difficult to conclude that any sequence of maximizers of~$R^\gamma_{\sigma, \tau}$ must degenerate when $(\sigma, \tau)\in (-1, \infty)^2 \setminus((-1/2, \infty)^2 \cup\{(-1/2, -1/2)\})$. Indeed, for such shifts the second term of the asymptotic expansion is maximized when $\beta$ tends either to $0$ or $\infty$. Since the expansion is uniform on compact sets this implies that any maximizing sequence must eventually leave all compacts. 

In the case of $H_{\sigma, \tau}$ similar reasoning can be used to conclude that any sequence of maximizers $\beta^H_{\sigma, \tau}$ must degenerate as $t\to 0^\limplus$. Indeed, from Theorem~\ref{thm:asymptotics} and~\eqref{eq:heat_trace_as_Laplace_transform} one finds that
\begin{equation}
  H_{\sigma, \tau}(\beta, t) = \frac{1}{t^2}- \frac{(1+2\sigma)\sqrt{\beta}+ (1+2\tau)/{\sqrt{\beta}}}{2 t} + O(1), \quad \mbox{as }t\to 0^\limplus, 
\end{equation}
where the remainder term is uniform for $\beta$ on compact subsets of $\R_\limplus$, which allows us to argue as above.

\subsection{Higher dimensions}

Using an idea of Laptev~\cite{Laptev2} and the bounds proved in Section~\ref{sec:Prelim} one can reduce the corresponding $d$-dimensional version of the problems considered here to lower dimensional ones. In~\cite{GittinsLarson} this strategy was applied to generalize the results of~\cite{MR3001382, MR3556370, vdBergGittins} to any dimension.

Providing asymptotic expansions similar to those in Theorem~\ref{thm:asymptotics} in higher dimensions is possible using the techniques from~\cite{MR1062904, MR1079775, MR1061661}, see also Section~\ref{sec:Proof_asymptotics}. Naturally the computations in general dimension are more difficult. However, for the cases where one would expect the existence of an asymptotic maximizer the formulas in Theorem~\ref{thm:asymptotics} are more detailed than necessary. For the applications considered, it is sufficient to know the first and second non-vanishing polynomial term, and that the oscillatory part of the expansion is of lower order than the second polynomial term. In the $d$-dimensional case the oscillatory terms will generally be of magnitude $\sim\lambda^{d-1}$. Precise, and uniform, asymptotic expansions to sufficiently low order can be obtained following the argument in Section~\ref{sec:Bounding_Osc}.

\subsection{Structure of the paper}
The remainder of the paper is structured as follows. In Section~\ref{sec:Prelim} we prove a number of bounds for $R_{\sigma, \tau}^\gamma$ which will enable us to exclude that there are sequences of maximizers which degenerate as $\lambda \to \infty$. In Section~\ref{sec:Proof_heat_kernel} we study the problem of maximizing $H_{\sigma, \tau}$ and prove Theorem~\ref{thm:HeatKernel}. Section~\ref{sec:Proof_main_thms} is dedicated to the proofs of Theorems~\ref{thm:Mainconvtheorem} and~\ref{thm:Mainconvtheorem_oscillators}, which will rely on the bounds proved in Section~\ref{sec:Prelim} and Theorem~\ref{thm:asymptotics}. Finally in Section~\ref{sec:Proof_asymptotics} we study the asymptotic behavior of $R_{\sigma, \tau}^\gamma(\beta, \lambda)$, as $\lambda \to \infty$, and prove Theorem~\ref{thm:asymptotics}.


\section{Preliminaries}
\label{sec:Prelim}

Before we continue we need to verify that we can actually talk about maximizers of $R^\gamma_{\sigma, \tau}(\, \cdot\, , \lambda)$ and $H_{\sigma, \tau}(\, \cdot\, , t)$. For $H_{\sigma, \tau}$ it is clear from~\eqref{eq:Htrace_explicit_form} that the maximization problem is well posed, and hence we only need to prove that this is the case for $R_{\sigma, \tau}^\gamma$.

\begin{lemma}\label{lem:existenceAprioriBounds}
  For each $\lambda\geq 0, \gamma>0$ and $\sigma, \tau>-1$ there exists a maximizing value $\beta^\gamma_{\sigma, \tau}(\lambda)$. If $\lambda\le 2\sqrt{(1+\sigma)(1+\tau)}$ then $R^\gamma_{\sigma, \tau}(\beta, \lambda)=0$ for all $\beta>0$, and thus any $\beta$ is a maximizer. If $\lambda > 2\sqrt{(1+\sigma)(1+\tau)}$ then all maximizers satisfy
  \begin{align}
    \beta^\gamma_{\sigma, \tau}(\lambda) 
    &\in \biggl(\frac{(1+\tau)^2}{\lambda^2}, \frac{\lambda^2}{(1+\sigma)^2}\biggr).
  \end{align}
\end{lemma}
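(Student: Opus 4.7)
The plan is to proceed in three steps: first establish continuity of $R^\gamma_{\sigma,\tau}(\,\cdot\,,\lambda)$ on $(0,\infty)$, then locate exactly where the summand can be positive, and finally invoke compactness to conclude existence of a maximizer together with the stated inclusion.

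\textbf{Step 1 (Continuity).} I would show that for any compact set $[a,b]\subset(0,\infty)$ only finitely many indices $(k_1,k_2)\in\N^2$ contribute a nonzero term to $R^\gamma_{\sigma,\tau}(\beta,\lambda)$ uniformly in $\beta\in[a,b]$. Since $1+\sigma, 1+\tau>0$, for $\beta\in[a,b]$ one has $(k_1+\sigma)\sqrt{\beta}\geq (k_1+\sigma)\sqrt{a}$ and $(k_2+\tau)/\sqrt{\beta}\geq (k_2+\tau)/\sqrt{b}$, so the $(k_1,k_2)$-summand vanishes whenever $k_1>\lambda/\sqrt{a}-\sigma$ or $k_2>\lambda\sqrt{b}-\tau$. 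Thus $R^\gamma_{\sigma,\tau}(\,\cdot\,,\lambda)$ is a finite sum of continuous functions on $[a,b]$, and hence continuous on all of $(0,\infty)$.

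\textbf{Step 2 (Support of the summand).} Since every term in~\eqref{eq:def_Reisz_mean} is non-negative, $R^\gamma_{\sigma,\tau}(\beta,\lambda)>0$ if and only if at least the smallest term is positive, i.e.\ $(1+\sigma)\sqrt{\beta}+(1+\tau)/\sqrt{\beta}<\lambda$. The elementary minimization of $f(\beta):=(1+\sigma)\sqrt{\beta}+(1+\tau)/\sqrt{\beta}$ over $\beta>0$ (take derivative in $s=\sqrt{\beta}$) gives
\begin{equation}
\min_{\beta>0} f(\beta)=2\sqrt{(1+\sigma)(1+\tau)},\qquad \text{attained at }\beta=(1+\tau)/(1+\sigma).
\end{equation}
Hence for $\lambda\leq 2\sqrt{(1+\sigma)(1+\tau)}$ every term vanishes and $R^\gamma_{\sigma,\tau}(\,\cdot\,,\lambda)\equiv0$, which handles the first case of the lemma.

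\textbf{Step 3 (Compactness and conclusion).} Assume now $\lambda>2\sqrt{(1+\sigma)(1+\tau)}$, so that $R^\gamma_{\sigma,\tau}(\beta_0,\lambda)\geq (\lambda-2\sqrt{(1+\sigma)(1+\tau)})^\gamma>0$ at $\beta_0=(1+\tau)/(1+\sigma)$. If $\beta\geq \lambda^2/(1+\sigma)^2$ then $(k_1+\sigma)\sqrt{\beta}\geq (1+\sigma)\sqrt{\beta}\geq \lambda$ for every $k_1\in\N$, so every term is zero; the symmetric argument shows the same for $\beta\leq(1+\tau)^2/\lambda^2$. Therefore
\begin{equation}
\sup_{\beta>0}R^\gamma_{\sigma,\tau}(\beta,\lambda)=\max_{\beta\in I}R^\gamma_{\sigma,\tau}(\beta,\lambda),\qquad I:=\bigl[(1+\tau)^2/\lambda^2,\,\lambda^2/(1+\sigma)^2\bigr],
\end{equation}
and continuity on the compact interval $I$ yields a maximizer. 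Because $R^\gamma_{\sigma,\tau}$ vanishes at both endpoints of $I$ but is strictly positive at the interior point $\beta_0$, any maximizer must lie in the open interval, proving the stated inclusion.

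The argument is essentially routine; the only mild subtlety is the justification of continuity in Step~1, which requires the observation that the sum is locally finite uniformly on compact subsets of $(0,\infty)$. All remaining bounds are immediate once one identifies $(k_1,k_2)=(1,1)$ as the minimizer of $(k_1+\sigma)\sqrt{\beta}+(k_2+\tau)/\sqrt{\beta}$.
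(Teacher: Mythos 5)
Your proof is correct and follows essentially the same route as the paper: identify that all summands vanish outside the claimed interval (and for $\lambda\le 2\sqrt{(1+\sigma)(1+\tau)}$ everywhere), observe that $R^\gamma_{\sigma,\tau}$ is positive at $\beta_0=(1+\tau)/(1+\sigma)$ when $\lambda$ exceeds the threshold, and conclude existence via continuity on a compact interval. The only difference is that you spell out the local-finiteness argument for continuity, which the paper takes for granted; the substance is identical.
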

Lemma~\ref{lem:existenceAprioriBounds} follows directly from~\cite[Lemma~9]{LaugesenLiu2}, but since our notation is different and the proof is simple we choose to include it.
\begin{proof}[Proof of Lemma~\ref{lem:existenceAprioriBounds}]
  Note first that if we can prove the second part of the lemma, that there are no maximizers outside $\bigl(\tfrac{(1+\tau)^2}{\lambda^2}, \tfrac{\lambda^2}{(1+\sigma)^2}\bigr)$, then the existence of a maximizer is clear by the continuity of $R_{\sigma, \tau}^\gamma(\beta, \lambda)$ as a function of $\beta$. 

  That $R_{\sigma, \tau}^\gamma(\beta, \lambda)=0$ for all $\beta$ if $\lambda\leq 2\sqrt{(1+\sigma)(1+\tau)}$ follows since the inequality
  \begin{equation}\label{eq:Rmean_is_0}
    \lambda-(1+\sigma)\sqrt{\beta}-(1+\tau)/{\sqrt{\beta}}\leq 0, 
  \end{equation}
  holds for all $\lambda\leq 2\sqrt{(1+\sigma)(1+\tau)}$. Similarly,~\eqref{eq:Rmean_is_0} holds if $\beta\leq \frac{(1+\tau)^2}{\lambda^2}$ or $\beta \geq \frac{\lambda^2}{(1+\sigma)^2}$, and thus $R_{\sigma, \tau}^\gamma(\beta, \lambda)=0$ for such $\beta$. However, if $\lambda>2\sqrt{(1+\sigma)(1+\tau)}$ then $R_{\sigma, \tau}^\gamma(\beta^\gamma_{\sigma, \tau}(\lambda), \lambda)\geq R_{\sigma, \tau}^\gamma(\frac{1+\tau}{1+\sigma}, \lambda)>0$, which implies that $\beta\notin\bigl(\tfrac{(1+\tau)^2}{\lambda^2}, \tfrac{\lambda^2}{(1+\sigma)^2}\bigr)$ cannot be a maximizer.
\end{proof}

To conclude that any sequence of maximizers of $R_{\sigma, \tau}^\gamma$, with $\lambda \to \infty$, remains in a compact subset of $\R_\limplus$ we require better control than that provided by Lemma~\ref{lem:existenceAprioriBounds}. When proving that this is in fact the case the following bounds will be useful:


\begin{lemma}\label{lem:1D_bounds}
  We have that:
  \begin{enumerate}[label=(\roman*)]
    \item\label{itm:berezin_1D} For $\sigma\geq -1/2$, 
    \begin{equation}
       \sum_{k\geq 1}(\lambda-(k+\sigma)\sqrt{\beta})_\limplus 
        \leq 
        \frac{\lambda^2}{2\sqrt{\beta}}, \label{eq:1D_Berezin}
    \end{equation}
    for all $\beta>0$ and $\lambda \geq 0$.

    \item\label{itm:general_sigma_1Dlem} For $\sigma>-1/2$ there exist positive constants $c_1, c_2, b_0$ such that
    \begin{align}
      \sum_{k\geq 1}(\lambda-(k+\sigma)\sqrt{\beta})_\limplus 
        &\leq 
        \frac{\lambda^2}{2\sqrt{\beta}}- c_1b\lambda+ c_2b^2  \sqrt{\beta}, \label{eq:1D_3term_ineq}
    \end{align}
    for all $\beta>0, \lambda \geq0$ and $b\in [0, b_0].$

    \item\label{itm:osc_1Dlem} There exist positive constants $c_1, c_2, b_0$ such that
    \begin{equation}\label{eq:1D_osc_bound}
      \sum_{k\geq 1}(\lambda-(k-\tfrac{1}{2})\sqrt{\beta})^2_\limplus \leq \frac{\lambda^3}{3\sqrt{\beta}}-c_1 b\sqrt{\beta}\lambda + c_2 b^{3/2}\beta, 
    \end{equation}
    for all $\beta>0, \lambda \geq 0$ and $b\in [0, b_0]$.
  \end{enumerate}
\end{lemma}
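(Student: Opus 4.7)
The plan is to prove (i), (ii), (iii) in sequence. Parts (i) and (ii) will be reduced to the classical one-dimensional Berezin--Lieb inequality (directly for (i), via a shift for (ii)), while part (iii) is a critical case at $\sigma = -\tfrac12$ for which this shift trick collapses and an explicit computation is required.

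\textbf{Parts (i) and (ii).} The eigenvalues $(k - \tfrac12)\sqrt{\beta}$ at $\sigma = -\tfrac12$ coincide with the spectrum of the one-dimensional harmonic oscillator $-\partial_x^2 + (\beta/4) x^2$, so the Berezin--Lieb inequality for its order-one Riesz mean gives (i) at $\sigma = -\tfrac12$ after an explicit phase-space integration; for $\sigma > -\tfrac12$ the summand is pointwise smaller and the bound extends by monotonicity. For (ii), for $b \in [0, \sigma + \tfrac12]$ I would absorb $b\sqrt{\beta}$ from $\sigma$ into $\lambda$:
\begin{equation*}
\sum_{k \geq 1}(\lambda - (k + \sigma)\sqrt{\beta})_\limplus = \sum_{k \geq 1}\bigl((\lambda - b\sqrt{\beta}) - (k + \sigma - b)\sqrt{\beta}\bigr)_\limplus \leq \frac{(\lambda - b\sqrt{\beta})_\limplus^2}{2\sqrt{\beta}},
\end{equation*}
where (i) applies since $\sigma - b \geq -\tfrac12$. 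Using $(\lambda - b\sqrt{\beta})_\limplus^2 \leq (\lambda - b\sqrt{\beta})^2$ then yields (ii) with $c_1 = 1$, $c_2 = \tfrac12$, $b_0 = \sigma + \tfrac12$.

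\textbf{Part (iii).} At $\sigma = -\tfrac12$ the same absorption produces an effective shift below $-\tfrac12$ where (i) is unavailable, so a direct computation is needed. Explicit summation yields
\begin{equation*}
\sum_{k \geq 1}(\eta - (k - \tfrac12)\sqrt{\beta})_\limplus = \frac{\eta^2}{2\sqrt{\beta}} - \frac{\sqrt{\beta}}{2}\bigl(\{\eta/\sqrt{\beta} + \tfrac12\} - \tfrac12\bigr)^2, \qquad \eta \geq 0,
\end{equation*}
and integrating through the Aizenman--Lieb identity $(\lambda - E)_\limplus^2 = 2\int_0^\lambda (\lambda' - E)_\limplus\, d\lambda'$ gives the exact formula
\begin{equation*}
\sum_{k \geq 1}(\lambda - (k - \tfrac12)\sqrt{\beta})_\limplus^2 = \frac{\lambda^3}{3\sqrt{\beta}} - \frac{\sqrt{\beta}\lambda}{12} + \beta\, Q\bigl(\lambda/\sqrt{\beta}\bigr),
\end{equation*}
where $Q(\mu) := \mu/12 - \int_0^\mu (\{u + \tfrac12\} - \tfrac12)^2\, du$ is bounded and $1$-periodic, with $Q(\mu) = \mu/12 - \mu^3/3$ on $[0, \tfrac12]$. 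The inequality (iii) reduces to $\mu(c_1 b - \tfrac{1}{12}) + Q(\mu) \leq c_2 b^{3/2}$ for $\mu \geq 0$ and $b \in [0, b_0]$. For $c_1 b \leq \tfrac{1}{12}$ the linear term is non-positive in each periodic block, and an elementary calculus argument localizes the supremum of the left-hand side at $\mu^* = \sqrt{c_1 b} \in [0, \tfrac12]$ with value exactly $\tfrac{2}{3}(c_1 b)^{3/2}$. Taking $c_1 = 1$, $c_2 = \tfrac{2}{3}$, $b_0 = \tfrac{1}{12}$ then closes the argument. The main obstacle is this final part: the exact $\sigma = -\tfrac12$ balance forces one to extract the subleading Weyl correction $-\sqrt{\beta}\lambda/12$ explicitly, and the $b^{3/2}$ exponent is dictated canonically by the small-$\mu$ behavior $Q(\mu) = \mu/12 - \mu^3/3$.
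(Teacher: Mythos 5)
Your proof is correct, and it reaches all three bounds with constants at least as good as the paper's. The route is recognizably different in each part, though in the same elementary spirit. For~\ref{itm:berezin_1D} the paper computes the sum exactly as a quadratic in $\lambda$ plus a term in $r=\{\lambda/\sqrt{\beta}-\sigma\}$ and maximizes over $r\in[0,1)$, whereas you invoke the Berezin--Lieb (phase-space) bound for the $1$D oscillator and then use pointwise monotonicity in $\sigma$; both are fine, but the paper's explicit formula is then recycled for~\ref{itm:general_sigma_1Dlem}. For~\ref{itm:general_sigma_1Dlem} the paper keeps the exact three-term expansion in $\sigma$ and then observes monotonicity of the left-hand side in $\sigma$, getting $c_1=\tfrac12$, $c_2=\tfrac18$, $b_0=1+2\sigma$; your shift $\lambda\mapsto\lambda-b\sqrt{\beta}$, $\sigma\mapsto\sigma-b$ followed by~\ref{itm:berezin_1D} is a cleaner reduction and yields the equivalent (rescaled in $b$) constants $c_1=1$, $c_2=\tfrac12$, $b_0=\sigma+\tfrac12$. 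One small point worth making explicit: when $\lambda-b\sqrt{\beta}<0$ your application of~\ref{itm:berezin_1D} is out of range as stated, but then every summand already vanishes because $\lambda<b\sqrt{\beta}\leq(\sigma+\tfrac12)\sqrt{\beta}<(1+\sigma)\sqrt{\beta}$, so both sides are zero and nothing breaks. For~\ref{itm:osc_1Dlem} the paper sums directly, bounds the cubic remainder $-\tfrac{r(1-r)(1-2r)}{6}\beta$ by $\tfrac{\beta}{36\sqrt3}$, and then checks the algebraic inequality $-\tfrac92+\sqrt3+54b-72b^{3/2}\le 0$ on $[0,\tfrac1{12}]$ after disposing of $\lambda\le\sqrt{\beta}/2$ trivially; you instead integrate the order-one bound through Aizenman--Lieb and carry the exact $1$-periodic error $Q$, which makes the optimization transparent and produces the identical constants $c_1=1$, $c_2=\tfrac23$, $b_0=\tfrac1{12}$ from the explicit maximizer $\mu^*=\sqrt{b}\in[0,\tfrac12]$ together with $Q\le 0$ on $[\tfrac12,1]$ and periodicity. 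Your version is somewhat more structural and makes it clear where the $b^{3/2}$ exponent comes from; the paper's is a shade shorter since the exact sum is already in hand.
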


\begin{proof}[{Proof of Lemma~\ref{lem:1D_bounds}}]
We begin by proving parts~\ref{itm:berezin_1D} and~\ref{itm:general_sigma_1Dlem} of the lemma. Clearly~\ref{itm:general_sigma_1Dlem} implies~\ref{itm:berezin_1D} when $\sigma>-1/2$. For $\sigma\geq -1/2$, 
\begin{align}
  \sum_{k\geq 1}(\lambda-(k+\sigma)\sqrt{\beta})_\limplus 
  &= 
    \sum_{k=1}^{\lf \lambda/{\sqrt{\beta}}-\sigma\rf}(\lambda-(k+\sigma)\sqrt{\beta})\\
  &=
    \frac{\lambda^2}{2\sqrt{\beta}}-\frac{1+2\sigma}{2}\lambda+\frac{r-r^2+\sigma+\sigma^2}{2}\sqrt{\beta}, 
    \label{eq:proof_1Dlem_e1}
\end{align}
where $r=\bigl\{\tfrac{\lambda}{\sqrt{\beta}}-\sigma\bigr\}$. Maximizing the right-hand side of~\eqref{eq:proof_1Dlem_e1} with respect to $r\in[0, 1)$ we find
\begin{equation}\label{eq:proof_1Dlem_e2}
\sum_{k\geq 1}(\lambda-(k+\sigma)\sqrt{\beta})_\limplus \leq\frac{\lambda^2}{2\sqrt{\beta}}-\frac{1+2\sigma}{2}\lambda +\frac{(1+2\sigma)^2}{8} \sqrt{\beta}, 
\end{equation}
which implies~\ref{itm:berezin_1D} when $\sigma=-1/2$. Moreover, since the left-hand side of~\eqref{eq:proof_1Dlem_e2} is decreasing in $\sigma$ we find~\ref{itm:general_sigma_1Dlem} with $c_1=1/2, c_2=1/8$ and $b_0=1+2\sigma$.

The proof of part~\ref{itm:osc_1Dlem} is similar:
\begin{align}
  \sum_{k\geq 1}(\lambda-(k-\tfrac12)\sqrt{\beta})_\limplus^2
  &= 
    \sum_{k=1}^{\lf \lambda/{\sqrt{\beta}}+1/2\rf}(\lambda-(k-\tfrac12)\sqrt{\beta})^2\\
  &=
    \frac{\lambda^3}{3\sqrt{\beta}}-\frac{\sqrt{\beta}}{12}\lambda-\frac{r(1-r)(1-2r)}{6}\beta\\
  &\leq
    \frac{\lambda^3}{3\sqrt{\beta}}-\frac{\sqrt{\beta}}{12}\lambda+\frac{\beta}{36\sqrt{3}}, 
\end{align}
where we again maximized in $r=\bigl\{\tfrac{\lambda}{\sqrt{\beta}}+\tfrac12\bigr\}$.

We aim for a bound on the form
\begin{equation}
  \sum_{k\geq 1}(\lambda-(k-\tfrac12)\sqrt{\beta})_\limplus^2\leq \frac{\lambda^3}{3\sqrt{\beta}}-b\sqrt{\beta}\lambda + \frac{2}{3}b^{3/2}\beta.
\end{equation}
The right-hand side is non-negative for $b, \beta>0$ and $\lambda\geq 0$, and hence the bound is trivially true when the left-hand side is zero, i.e.\ for $\lambda \leq \sqrt{\beta}/2$. It thus suffices to prove that
\begin{equation}
  \frac{\lambda^3}{3\sqrt{\beta}}-\frac{\sqrt{\beta}}{12}\lambda+\frac{\beta}{36\sqrt{3}}\leq \frac{\lambda^3}{3\sqrt{\beta}}-b\sqrt{\beta}\lambda + \frac{2}{3}b^{3/2}\beta, 
\end{equation}
when $b$ is small enough and $\lambda \geq \sqrt{\beta}/2$. The above inequality holds for all $\lambda \geq \sqrt{\beta}/2$ if and only if
\begin{equation}
  b\leq 1/12 \quad \mbox{and}\quad -\frac{9}{2}+\sqrt{3}+54b-72b^{3/2}\leq 0, 
\end{equation}
which it is easy to check holds for all $b\in [0, 1/12]$. This completes the proof of~\ref{itm:osc_1Dlem} with $c_1=1, c_2=2/3$ and $b_0=1/12$, and hence the proof of Lemma~\ref{lem:1D_bounds}.
\end{proof}


Based on Lemma~\ref{lem:1D_bounds} we can adapt an idea from~\cite{Laptev2} (see also~\cite{GittinsLarson}) to reduce the proof of a good enough bound for the counting function to a bound for what is essentially a one-dimensional Riesz mean of order $1$.

\begin{lemma}\label{lem:bound_N}
  Fix $\sigma, \tau >-1/2$. There exist positive constants $c_1, c_2, c_3, b_0$ such that
  \begin{equation}
    N_{\sigma, \tau}(\beta, \lambda) \leq \frac{\lambda^2}{2}- c_1 b\frac{1+\beta}{\sqrt{\beta}}\lambda + c_2 b^2\frac{1+\beta^2}{\beta} + c_3(\lambda+1), 
  \end{equation}
  for all $\lambda\geq 0$, $\beta>0$ and $b\in [0, b_0]$.
\end{lemma}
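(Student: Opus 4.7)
The plan is to reduce the bound to a one-dimensional Riesz mean of order~$1$ via a Laptev-style lifting (as referenced by the authors via~\cite{Laptev2}) and then invoke Lemma~\ref{lem:1D_bounds}\ref{itm:general_sigma_1Dlem}. Because $N_{\sigma,\tau}(\beta,\lambda) = N_{\tau,\sigma}(1/\beta,\lambda)$ and the right-hand side of the desired inequality is invariant under the simultaneous exchange $(\sigma,\tau,\beta)\leftrightarrow(\tau,\sigma,1/\beta)$, it suffices to prove the bound for $\beta\ge 1$.

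For $\beta\ge 1$, summing first in $k_1$ gives
\[
N_{\sigma,\tau}(\beta,\lambda) = \sum_{k_1\ge 1} M_\tau(y_{k_1}),\qquad y_{k_1}:=\sqrt{\beta}\bigl(\lambda-(k_1+\sigma)\sqrt{\beta}\bigr),
\]
where $M_\tau(y)=\#\{k_2\ge 1:k_2+\tau\le y\}$. The elementary pointwise bound $M_\tau(y)\le (y-\tau)_\limplus\le y_\limplus + \tau_\limminus$, valid for all $y\in\R$ with $\tau_\limminus := \max(0,-\tau) < 1/2$, yields
\[
N_{\sigma,\tau}(\beta,\lambda)\le \sum_{k_1\ge 1}(y_{k_1})_\limplus + \tau_\limminus\cdot\#\{k_1\ge 1 : y_{k_1}\ge \tau\}.
\]
The cardinality is at most $\lambda/\sqrt{\beta} + C_{\sigma,\tau}\le \lambda + C_{\sigma,\tau}$ for $\beta\ge 1$, while $\sum_{k_1\ge 1}(y_{k_1})_\limplus = \sqrt{\beta}\sum_{k_1\ge 1}(\lambda-(k_1+\sigma)\sqrt{\beta})_\limplus$ is a one-dimensional Riesz mean of order~$1$ controlled by Lemma~\ref{lem:1D_bounds}\ref{itm:general_sigma_1Dlem} (which is applicable since $\sigma>-1/2$, valid for $b\in[0,1+2\sigma]$). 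Multiplying through by $\sqrt{\beta}$ and collecting terms one arrives at
\[
N_{\sigma,\tau}(\beta,\lambda)\le \frac{\lambda^2}{2} - c_1 b\sqrt{\beta}\,\lambda + c_2 b^2\beta + \tau_\limminus\lambda + C'_{\sigma,\tau}, \qquad \beta\ge 1,\ b\in[0, 1+2\sigma].
\]

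To fit the claimed form, observe that for $\beta\ge 1$ one has $(1+\beta)/\sqrt{\beta}\le 2\sqrt{\beta}$ and $(1+\beta^2)/\beta\ge\beta$, so the first two $\beta$-dependent terms above are dominated by $-(c_1/2)b(1+\beta)\lambda/\sqrt{\beta}$ and $c_2 b^2(1+\beta^2)/\beta$ respectively, while the residual $\tau_\limminus\lambda + C'_{\sigma,\tau}$ is absorbed into $c_3(\lambda+1)$ for $c_3$ sufficiently large. Setting $b_0 := \min(1+2\sigma,1+2\tau) > 0$ ensures the 1D lemma is applicable in either direction; the symmetry noted above then propagates the bound to all $\beta>0$. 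The \textbf{main technical obstacle} is that the shift error from the estimate $M_\tau(y)\le (y-\tau)_\limplus$ is proportional to the number of nonzero summands $\#\{k_1\ge 1 : y_{k_1}\ge\tau\}\sim\lambda/\sqrt{\beta}$, which is not $O(\lambda)$ uniformly in $\beta$; the case split at $\beta=1$ (resolved by symmetry) is precisely what forces this cardinality into the allowed $c_3(\lambda+1)$ error.
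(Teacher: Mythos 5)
Your proposal is correct and follows essentially the same strategy as the paper: sum over one index to reduce to a one-dimensional order-$1$ Riesz mean, invoke Lemma~\ref{lem:1D_bounds}\ref{itm:general_sigma_1Dlem}, and combine the two directions via the dichotomy $\beta\gtrless 1$. The differences are cosmetic bookkeeping: the paper first passes to $N_{\sigma'}$ with a common shift $\sigma'\in(-1/2,\min\{\sigma,\tau\}]$, absorbs the shift error into a translated spectral parameter $\lambda+\sigma'_\limminus/\sqrt{\beta}$ inside the 1D lemma, writes both directional bounds~\eqref{eq:Nbound1}--\eqref{eq:Nbound2} explicitly, and verifies that the claimed RHS dominates their minimum; you keep $\sigma,\tau$ separate, bound the shift error by a cardinality count $\tau_\limminus\cdot\#\{k_1:y_{k_1}\ge\tau\}$, and dispose of the second case with the symmetry $N_{\sigma,\tau}(\beta,\lambda)=N_{\tau,\sigma}(1/\beta,\lambda)$. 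Both yield the same constants up to constants; one small thing worth making explicit is that after the $\beta\leftrightarrow 1/\beta$ reduction one must take the worse of the $(\sigma,\tau)$- and $(\tau,\sigma)$-constants, which your choice $b_0=\min(1+2\sigma,1+2\tau)$ does (and $c_1=1/2$, $c_2=1/8$ from Lemma~\ref{lem:1D_bounds} are already shift-independent).
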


\begin{remark}
  A similar bound appears in~\cite[Propositon~10]{LaugesenLiu2}. However, for $\sigma, \tau$ small the linear term of that bound becomes positive. In what follows it will be essential for this term to be negative, which corresponds to the positivity of $c_1$ in Lemma~\ref{lem:bound_N}.
\end{remark}

\begin{proof}[Proof of Lemma~\ref{lem:bound_N}]
The bound is an easy consequence of Lemma~\ref{lem:1D_bounds}. First observe that for all $\lambda\geq 0$, $\beta >0$ and $\sigma'\in (-1/2, \min\{\sigma, \tau\}]$ we have
\begin{align}
  N_{\sigma, \tau}(\beta, \lambda) 
  &\leq N_{\sigma'}(\beta, \lambda).
\end{align}

A straightforward estimate yields that
\begin{align}
  N_{\sigma'}(\beta, \lambda) 
  &=
  \sum_{k\in\N^2}(\lambda - (k_1+\sigma')\sqrt{\beta}-(k_2+\sigma')/{\sqrt{\beta}})_\limplus^0\\
  &=
  \sum_{k_1\geq 1}\lf (\lambda\sqrt{\beta}-(k_1+\sigma)\beta-\sigma')_\limplus\rf\\
  &\leq
  \sum_{k_1\geq 1} (\lambda\sqrt{\beta}-(k_1+\sigma')\beta+\sigma'_\limminus)_\limplus\\
  &= 
  \sqrt{\beta}\sum_{k_1\geq 1}(\lambda+\sigma'_\limminus/{\sqrt{\beta}}-(k_1+\sigma')\sqrt{\beta})_\limplus.\label{eq:shifter_Rmean}
\end{align}
Applying~\eqref{eq:1D_3term_ineq} of Lemma~\ref{lem:1D_bounds} one obtains that
\begin{align}\label{eq:Nbound1}
  N_{\sigma'}(\beta, \lambda) 
  &\leq 
  \frac{(\lambda+\sigma'_\limminus/{\sqrt{\beta}})^2}{2}-c_1 b\sqrt{\beta}(\lambda+\sigma'_\limminus/{\sqrt{\beta}})+c_2 b^2\beta\\
  &=
  \frac{\lambda^2}{2}- c_1 b\sqrt{\beta}\lambda + c_2 b^2\beta + \frac{\sigma'_\limminus}{\sqrt{\beta}}\lambda + \frac{(\sigma'_\limminus)^2}{2\beta}-c_1 b \sigma'_\limminus.
\end{align}
Arguing as above but switching the roles of $k_1$ and $k_2$ one correspondingly finds that
\begin{equation}\label{eq:Nbound2}
  N_{\sigma'}(\beta, \lambda) 
  \leq
  \frac{\lambda^2}{2}- \frac{c_1 b}{\sqrt{\beta}}\lambda + \frac{c_2 b^2}{\beta} + \sigma'_\limminus\sqrt{\beta}\lambda + \frac{(\sigma'_\limminus)^2}{2}\beta-c_1 b \sigma'_\limminus.
\end{equation}
Together these two bounds imply that
\begin{equation}\label{eq:Nbound3}
  N_{\sigma'}(\beta, \lambda) \leq 
  \frac{\lambda^2}{2}- \frac{c_1 b}{2}\frac{1+\beta}{\sqrt{\beta}}\lambda + c_2 b^2\frac{1+\beta^2}{\beta} + \sigma'_\limminus\lambda + \frac{(\sigma'_\limminus)^2}{2}-c_1 b \sigma'_\limminus.
\end{equation}
Indeed, for $\beta\geq 1$ the right-hand side of~\eqref{eq:Nbound3} is larger than that of~\eqref{eq:Nbound1}, and for $\beta\leq 1$ larger than that of~\eqref{eq:Nbound2}. This completes the proof of the claimed bound with constants related to those in Lemma~\ref{lem:1D_bounds}.
\end{proof}


In the case of the harmonic oscillators we prove the following lemma, which will play the same role as Lemma~\ref{lem:bound_N} in what follows.

\begin{lemma}\label{lem:2D_osc_bound}
  There exist positive constants $c_1, c_2, b_0$ such that
  \begin{equation}
    R^1_{\limminus1/2}(\beta, \lambda) \leq \frac{\lambda^3}{6}- c_1 b \frac{1+\beta^2}{\beta}\lambda + c_2 b^{3/2}\frac{1+\beta^3}{\beta^{3/2}}, 
  \end{equation}
  for all $\beta>0, \lambda \geq 0$ and $b\in [0, b_0]$.
\end{lemma}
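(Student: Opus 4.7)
The plan is to mimic the proof of Lemma~\ref{lem:bound_N}, but with the roles of Lemma~\ref{lem:1D_bounds}\ref{itm:berezin_1D} and Lemma~\ref{lem:1D_bounds}\ref{itm:general_sigma_1Dlem} played instead by Lemma~\ref{lem:1D_bounds}\ref{itm:berezin_1D} and Lemma~\ref{lem:1D_bounds}\ref{itm:osc_1Dlem}. That is, I will use the sharp (Berezin-type) order-one bound on the inner one-dimensional sum to reduce the question to a one-dimensional Riesz mean of order two, and then apply the refined estimate~\eqref{eq:1D_osc_bound}. Doing this in each of the two possible summation orders yields an asymmetric bound; averaging the two produces the symmetric expression appearing in the statement.

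More precisely, I would first write
\begin{equation}
  R^1_{\limminus 1/2}(\beta, \lambda)
  = \sum_{k_1\geq 1}\sum_{k_2\geq 1}\bigl(\lambda-(k_1-\tfrac12)\sqrt{\beta}-(k_2-\tfrac12)/\sqrt{\beta}\bigr)_\limplus.
\end{equation}
For fixed $k_1$ set $\mu=\lambda-(k_1-\tfrac12)\sqrt{\beta}$ and apply Lemma~\ref{lem:1D_bounds}\ref{itm:berezin_1D} (with $\sigma=-\tfrac12$ and spacing $1/\sqrt{\beta}$) to the sum over $k_2$; this gives $\sum_{k_2\geq 1}(\mu-(k_2-\tfrac12)/\sqrt{\beta})_\limplus\leq \mu^2\sqrt{\beta}/2$. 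Substituting back and then applying Lemma~\ref{lem:1D_bounds}\ref{itm:osc_1Dlem} to the remaining order-two sum in $k_1$ yields, with the constants of Lemma~\ref{lem:1D_bounds}\ref{itm:osc_1Dlem},
\begin{equation}
  R^1_{\limminus 1/2}(\beta, \lambda)
  \leq \frac{\sqrt{\beta}}{2}\Bigl(\frac{\lambda^3}{3\sqrt{\beta}}-c_1 b\sqrt{\beta}\lambda+c_2 b^{3/2}\beta\Bigr)
  = \frac{\lambda^3}{6}-\frac{c_1 b\beta}{2}\lambda+\frac{c_2 b^{3/2}\beta^{3/2}}{2}.
\end{equation}
Performing the same argument with the roles of $k_1$ and $k_2$ interchanged (equivalently, replacing $\beta$ by $1/\beta$ in the above) produces the dual bound with $\beta$ replaced by $1/\beta$ throughout.

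Finally, averaging these two bounds gives a single estimate valid for all $\beta>0$, since one bound is sharper for $\beta\leq 1$ and the other for $\beta\geq 1$:
\begin{equation}
  R^1_{\limminus 1/2}(\beta, \lambda)
  \leq \frac{\lambda^3}{6}
   - \frac{c_1 b}{4}\Bigl(\beta+\frac{1}{\beta}\Bigr)\lambda
   + \frac{c_2 b^{3/2}}{4}\Bigl(\beta^{3/2}+\frac{1}{\beta^{3/2}}\Bigr).
\end{equation}
Recognizing $\beta+1/\beta=(1+\beta^2)/\beta$ and $\beta^{3/2}+1/\beta^{3/2}=(1+\beta^3)/\beta^{3/2}$ and renaming the constants then delivers the stated inequality, with $b_0$ inherited unchanged from Lemma~\ref{lem:1D_bounds}\ref{itm:osc_1Dlem}. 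The argument is essentially routine once the two ingredients are chosen correctly; the only point that needs care is matching the spacing $\sqrt{\beta}$ versus $1/\sqrt{\beta}$ when invoking the one-dimensional bounds, which is straightforward by scaling.
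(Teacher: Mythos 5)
Your proposal is correct and follows essentially the same route as the paper: apply the Berezin-type bound \eqref{eq:1D_Berezin} to the inner one-dimensional sum to reduce to an order-two Riesz mean, then apply \eqref{eq:1D_osc_bound}, repeat with $k_1, k_2$ swapped, and average the two resulting bounds.
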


\begin{proof}[Proof of Lemma~\ref{lem:2D_osc_bound}]
  Again the lemma is a simple consequence of Lemma~\ref{lem:1D_bounds}.
  Applying first~\eqref{eq:1D_Berezin} and then~\eqref{eq:1D_osc_bound} we find that
  \begin{align}
    R_{\limminus1/2}^1(\beta, \lambda) 
    &= 
    \sum_{k\in\N^2}(\lambda-(k_1-\tfrac12)\sqrt{\beta}-(k_2-\tfrac12)/{\sqrt{\beta}})_\limplus\\
    &\leq
    \frac{\sqrt{\beta}}{2}\sum_{k_1\geq 1}(\lambda-(k_1-\tfrac12)\sqrt{\beta})^2_\limplus\\
    &\leq
    \frac{\lambda^3}{6}-\frac{c_1b}{2}\beta\lambda+\frac{c_2 b^{3/2}}{2}\beta^{3/2}.
  \end{align}
  Arguing identically but switching the roles of $k_1$ and $k_2$ we find that 
  \begin{equation}
    R_{\limminus1/2}^1(\beta, \lambda) \leq \frac{\lambda^3}{6}-\frac{c_1b}{2}\beta^{-1}\lambda+\frac{c_2 b^{3/2}}{2}\beta^{-3/2}.
  \end{equation}
  Taking the average of the two bounds completes the proof of Lemma~\ref{lem:2D_osc_bound}.
\end{proof}


Combining Lemmas~\ref{lem:bound_N} and~\ref{lem:2D_osc_bound} with the Aizenman--Lieb Identity~\eqref{eq:Aizenman_Lieb} one finds the following.
\begin{corollary}\label{cor:three_term_bounds_gamma}
We have that:
\begin{enumerate}[label=(\roman*)]
   \item For $\sigma, \tau >-1/2$ and $\gamma>0$, there exist positive constants $c_1, c_2, c_3, b_0$ such that
  \begin{equation}\label{eq:general_gamma}
    R_{\sigma, \tau}^\gamma(\beta, \lambda) \leq \frac{\lambda^{2+\gamma}}{(1+\gamma)(2+\gamma)}-c_1 b\frac{1+\beta}{\sqrt{\beta}}\lambda^{1+\gamma}+c_2 b^2\frac{1+\beta^2}{\beta}\lambda^\gamma+c_3(\lambda+1)\lambda^\gamma, 
  \end{equation}
  for all $\beta>0, \lambda \geq 0$ and $b\in [0, b_0]$.

  \item For $\gamma \geq 1$ there exist positive constants $c_1, c_2, b_0$ such that
  \begin{equation}\label{eq:osc_gamma}
    R^\gamma_{\limminus1/2}(\beta, \lambda) \leq \frac{\lambda^{2+\gamma}}{(1+\gamma)(2+\gamma)}- c_1 b \frac{1+\beta^2}{\beta}\lambda^{\gamma} + c_2 b^{3/2}\frac{1+\beta^3}{\beta^{3/2}}\lambda^{\gamma-1}, 
  \end{equation}
  for all $\beta>0, \lambda \geq 0$ and $b\in [0, b_0]$.
 \end{enumerate}
\end{corollary}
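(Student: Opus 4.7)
The plan is to insert the pointwise bounds from Lemma~\ref{lem:bound_N} and Lemma~\ref{lem:2D_osc_bound} into the Aizenman--Lieb identity~\eqref{eq:Aizenman_Lieb}. Because $\eta\mapsto\eta^{\gamma_2-\gamma_1-1}$ is a positive kernel and $R^{\gamma_1}_{\sigma,\tau}(\beta,\lambda-\eta)=0$ for $\eta\geq\lambda$, an upper bound on $R^{\gamma_1}_{\sigma,\tau}(\beta,\mu)$ valid for $\mu\geq 0$ integrates to a valid upper bound on $R^{\gamma_2}_{\sigma,\tau}(\beta,\lambda)$. The only calculation required is the Beta integral
\begin{equation}
\int_0^\lambda \eta^{\gamma_2-\gamma_1-1}(\lambda-\eta)^p\, d\eta = B(\gamma_2-\gamma_1,\, p+1)\, \lambda^{p+\gamma_2-\gamma_1},
\end{equation}
which converts each monomial in the input bound into a constant multiple of a higher-order monomial in $\lambda$.

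For part~(i), I take $\gamma_1=0$ and $\gamma_2=\gamma$, so the Aizenman--Lieb prefactor equals $B(1,\gamma)^{-1}=\gamma$ and $R^0_{\sigma,\tau}=N_{\sigma,\tau}$. Inserting the four terms of Lemma~\ref{lem:bound_N} and evaluating the resulting Beta integrals, the leading $\tfrac12\lambda^2$ produces $\lambda^{2+\gamma}/[(1+\gamma)(2+\gamma)]$, the linear term produces a multiple of $b(1+\beta)\beta^{-1/2}\lambda^{1+\gamma}$ with coefficient $-c_1/(1+\gamma)$, the constant in $\lambda$ produces $c_2 b^2(1+\beta^2)\beta^{-1}\lambda^{\gamma}$, and the error $c_3(\lambda+1)$ contributes $\tfrac{c_3}{1+\gamma}\lambda^{1+\gamma}+c_3\lambda^\gamma$, which is bounded above by $c_3(\lambda+1)\lambda^\gamma$. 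After relabelling the constants this is precisely~\eqref{eq:general_gamma}.

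For part~(ii), the case $\gamma=1$ is exactly the statement of Lemma~\ref{lem:2D_osc_bound}, noting that $\tfrac{1}{(1+\gamma)(2+\gamma)}=\tfrac16$ at $\gamma=1$. For $\gamma>1$ I would take $\gamma_1=1$ and $\gamma_2=\gamma$, so the prefactor becomes $B(2,\gamma-1)^{-1}=\gamma(\gamma-1)$. Inserting the bound of Lemma~\ref{lem:2D_osc_bound} and applying the Beta integral, the three terms produce respectively $\lambda^{2+\gamma}/[(1+\gamma)(2+\gamma)]$, then $-c_1 b(1+\beta^2)\beta^{-1}\lambda^\gamma$ (the prefactor cancels $B(\gamma-1,2)=1/[\gamma(\gamma-1)]$ exactly), and finally $\gamma c_2 b^{3/2}(1+\beta^3)\beta^{-3/2}\lambda^{\gamma-1}$. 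This yields~\eqref{eq:osc_gamma} after absorbing the factor $\gamma$ into $c_2$.

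The argument is essentially bookkeeping once the Aizenman--Lieb identity has been invoked, and I do not anticipate a genuine obstacle. The only minor subtlety is that~\eqref{eq:Aizenman_Lieb} requires $\gamma_2>\gamma_1$, which is why part~(ii) must treat $\gamma=1$ separately as a base case rather than attempting to derive it from Lemma~\ref{lem:2D_osc_bound} through the identity.
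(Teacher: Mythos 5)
Your proposal is correct and follows exactly the route the paper sketches (the paper's entire ``proof'' is the one-line remark that the corollary follows by combining Lemmas~\ref{lem:bound_N} and~\ref{lem:2D_osc_bound} with the Aizenman--Lieb identity); the Beta-integral bookkeeping you carry out, including the observation that the integral truncates at $\eta=\lambda$ and the separate treatment of the base case $\gamma=1$ in part~(ii), is precisely what needs to be checked.
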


\begin{remark}
  We note that the proofs above lift without much work to the corresponding $d$-dimensional problem. Using again the Aizenman--Lieb Identity~\eqref{eq:Aizenman_Lieb} one finds a version of~\eqref{eq:1D_Berezin} for higher order Riesz means. When $\gamma\geq 1$ one can follow the lifting argument of~\cite{Laptev2} (used in a similar context in~\cite{GittinsLarson}): use the corresponding one-term bound to bound the first $d-1$ sums and then a bound similar to~\eqref{eq:1D_3term_ineq} to bound the final sum. For the case of the counting function one can mimic~\eqref{eq:shifter_Rmean} reducing the problem to bound a Riesz mean of order one where the spectral parameter $\lambda$ is slightly shifted. Bounding this Riesz mean can be carried out as described above.
\end{remark}


\section{Proof of Theorem~\ref{thm:HeatKernel}}
\label{sec:Proof_heat_kernel}
We now turn to the problem of maximizing $H_{\sigma, \tau}$. Due to the fact that we have a closed expression for $H_{\sigma, \tau}$ this is reduced to a maximization problem in one real variable. However solving this problem still turns out to be rather tedious.

Since
\begin{align}
  H_{\sigma, \tau}(\beta, t) 
  &=
   \frac{e^{-t (\sigma\sqrt{\beta} +\tau/{\sqrt{\beta}})}}{(e^{t\sqrt{\beta}}-1)(e^{t/{\sqrt{\beta}}}-1)}
\end{align}
is non-negative, continuous in $\beta$ and $\lim_{\beta\to0}H_{\sigma, \tau}(\beta, t)=\lim_{\beta\to\infty}H_{\sigma, \tau}(\beta, t)=0$ for all $t>0$ and $\sigma, \tau>-1$, it follows that there is at least one maximizing $\beta$ for each $t$.

Set $x=\sqrt{\beta}$ and note that
\begin{equation}
  H_{\sigma, \tau}(x^2, t)=
  \frac{e^{-t((\sigma+1/2)x+(\tau+1/2)/x)}}{4t^2}\frac{tx/2}{\sinh(tx/2)}\frac{t/(2x)}{\sinh(t/(2x)}.
\end{equation}
By the monotonicity of the logarithm we can equivalently consider maximizing $\log(H_{\sigma, \tau})$:
\begin{align}
  \log(H_{\sigma, \tau}(x^2, t))
  &=
   -t((\sigma+1/2)x+(\tau+1/2)/x)\\
   &\quad-\log\biggl(\frac{\sinh(tx/2)}{tx/2}\biggr)-\log\biggl(\frac{\sinh(t/(2x))}{t/(2x)}\biggr)-\log(4t^2).
\end{align}
By recalling that $\log(\sinh(x)/x)$ is increasing and strictly convex on $\R_\limplus$ it follows that
\begin{equation}
  \frac{\partial^2 }{\partial x^2}\log(H_{\sigma, \tau}(x^2, t))< -t\frac{\tau+1/2}{2x^3}. 
\end{equation}
Hence, if $\tau\geq-1/2$ the function $\log(H_{\sigma, \tau}(x^2, t))$ is concave in $x$. Since $\log(H_{\sigma, \tau}(x^2, t))$ also tends to $-\infty$ when $x\to 0$ or $\infty$ it has a unique maximum. Since $H_{\sigma, \tau}(x^2, t)=H_{\tau, \sigma}(1/x^2, t)$ we can conclude that the same is true if instead $\sigma \geq -1/2$. Moreover, when $\sigma=\tau\geq -1/2$ the symmetry $H_{\sigma}(x^2, t)=H_\sigma(1/x^2, t)$ implies that $x=1$ must be the unique maximizer.

As the function $x \mapsto \log(H_{\sigma, \tau}(x^2, t))$ is smooth any maximizing $x^*(t)$ must satisfy
\begin{equation}\label{eq:log_heat_trace_derivative}
  \frac{\partial }{\partial x}\log(H_{\sigma, \tau}(x^2, t))
  =
  -\frac{t}{2x^2}\Bigl[\Bigl(1+2\sigma+\coth\Bigl(\frac{tx}{2}\Bigr)\Bigr)x^2-\Bigl(1+2\tau+\coth\Bigl(\frac{t}{2x}\Bigr)\Bigr)\Bigr]=0.
\end{equation}
When $t\to 0^\limplus$ it is easy to see that this equation
has a solution which converges to $\sqrt{\frac{1+2\tau}{1+2\sigma}}$. Similarly when $t\to \infty$ we see that there is a solution converging to $\sqrt{\frac{1+\tau}{1+\sigma}}.$ When $\max\{\sigma, \tau\}> -1/2$ this concludes the proof of the theorem since we know that the solution is unique.

When $\sigma$ and $\tau$ are both less than $-1/2$ maximizers are no longer necessarily unique when $t$ is small. However, when $t \to \infty$ any sequence of maximizers converges. If there is some solution $x^*(t)$ of~\eqref{eq:log_heat_trace_derivative} which remains in a compact subset of $\R_\limplus$ as $t\to \infty$, we must have that
\begin{equation}
  \lim_{t\to \infty} x^*(t) = \sqrt{\frac{1+\tau}{1+\sigma}},
 \end{equation} 
 since otherwise the expression in the brackets is bounded away from zero when $t$ is large enough.

 What remains is to conclude that there can be no maximizers which degenerate, thus implying that the asymptotically stable stationary point is indeed an asymptotic maximizer. Since $H_{\sigma, \tau}(x^2, t)=H_{\tau, \sigma}(1/x^2, t)$ any sequence of maximizers tending to infinity as $t\to \infty$ implies the existence of a sequence of maximizers tending to zero for the problem where $\sigma$ and~$\tau$ have been interchanged. Therefore it is sufficient to show that we cannot have maximizers degenerating to zero.

 Assume for contradiction that we have a sequence of maximizers $x^*=x^*(t)$ such that $\lim_{t\to \infty}x^*=0$. Since the factor in front of the parenthesis is non-zero,~\eqref{eq:log_heat_trace_derivative} implies that
 \begin{equation}
   \lim_{t\to \infty}\coth\Bigl(\frac{t x^*(t)}{2}\Bigr)x^*(t)^2=2+2\tau.
 \end{equation}
But this is a contradiction since
\begin{equation}
0\leq \coth\Bigl(\frac{t x^*(t)}{2}\Bigr)x^*(t)^2\leq \Bigl(1+\frac{2}{t x^*(t)}\Bigr)x^*(t)^2 \to 0 \quad \mbox{as } t\to \infty,
\end{equation}
which completes the proof of Theorem~\ref{thm:HeatKernel}.


\section{Proof of Theorems~\ref{thm:Mainconvtheorem} and~\ref{thm:Mainconvtheorem_oscillators}}
\label{sec:Proof_main_thms}

We now turn our attention to the main results of the paper, namely Theorems~\ref{thm:Mainconvtheorem} and~\ref{thm:Mainconvtheorem_oscillators}. As the proofs of the two theorems are essentially identical we will write out only the former in detail. The main idea is to combine the bounds in Corollary~\ref{cor:three_term_bounds_gamma} with Theorem~\ref{thm:asymptotics} following the strategy of~\cite{MR3001382}, with some modifications resembling those in~\cite{GittinsLarson}.

Fix $\sigma, \tau>-1/2$ and $\gamma>0$. For notational convenience we will write $R(\beta, \lambda)=R_{\sigma, \tau}(\beta, \lambda)$, $\beta=\beta^\gamma_{\sigma, \tau}(\lambda)$ and $\beta^*=\frac{1+2\tau}{1+2\sigma}$ throughout the proof.

By the maximality of $\beta$ and~\eqref{eq:general_gamma} of Corollary~\ref{cor:three_term_bounds_gamma} we have that
\begin{equation}
  R(\beta^*\!, \lambda)\leq R(\beta, \lambda) \leq \frac{\lambda^{2+\gamma}}{(1+\gamma)(2+\gamma)}-c_1 b\frac{1+\beta}{\sqrt{\beta}}\lambda^{1+\gamma}+c_2 b^2\frac{1+\beta^2}{\beta}\lambda^\gamma+c_3(\lambda+1)\lambda^\gamma.
\end{equation}
Using the asymptotic expansion of the left-hand side given by Theorem~\ref{thm:asymptotics}, rearranging and using that $\frac{1+\beta^2}{1+\beta}\geq1+\beta$ we find
\begin{equation}\label{eq:beta_bound_mainproof}
  c_1 b \frac{1+\beta}{\sqrt{\beta}}\Bigl(1- b \frac{c_2 (1+\beta)}{c_1 \sqrt{\beta}\lambda} \Bigr) \leq C + O(\lambda^{-\min\{1, \gamma\}}), 
\end{equation}
as $\lambda\to \infty.$

From Lemma~\ref{lem:existenceAprioriBounds} we know that $\frac{1+\beta}{\sqrt{\beta}\lambda}\leq \frac{1}{1+\tau}+\frac{1}{1+\sigma}$, and hence we can choose $b$ small enough so that the left-hand side of~\eqref{eq:beta_bound_mainproof} is positive. Therefore we conclude that
\begin{equation}
  \limsup_{\lambda \to \infty} \frac{1+\beta}{\sqrt{\beta}} \leq C, 
\end{equation}
and hence $\beta=\beta_{\sigma, \tau}^\gamma(\lambda)$ remains uniformly bounded away from zero and infinity.

As we now know that all maximizers are contained in a compact subset of $\R_\limplus$ we can use Theorem~\ref{thm:asymptotics} to expand both sides of the inequality $R(\beta^*\!, \lambda)\leq R(\beta, \lambda)$ with remainder terms independent of $\beta$. After rearranging this yields that
\begin{equation}\label{eq:asym_bound_beta}
    (1+2\sigma)\sqrt{\beta}+(1+2\tau)/{\sqrt{\beta}}\leq
    (1+2\sigma)\sqrt{\beta^*}+(1+2\tau)/{\sqrt{\beta^*}}
     + O(\lambda^{-\min\{\gamma, 1\}}).
\end{equation}  
Since $\beta^*$ is the unique minimizer of the function $x \mapsto (1+2\sigma)\sqrt{x}+(1+2\tau)/{\sqrt{x}}$ and the remainder term is independent of $\beta$,
\eqref{eq:asym_bound_beta} implies that
\begin{equation}
  \beta= \beta^* + o(1)\quad \mbox{as } \lambda \to \infty, 
\end{equation}
which concludes the proof of Theorem~\ref{thm:Mainconvtheorem}.

The proof of Theorem~\ref{thm:Mainconvtheorem_oscillators} is almost identical with the only change being the application of~\eqref{eq:osc_gamma} instead of~\eqref{eq:general_gamma} in the first part of the proof.


\section{Proof of Theorem~\ref{thm:asymptotics}}
\label{sec:Proof_asymptotics}

What remains is to prove Theorem~\ref{thm:asymptotics}. The calculations follow those of Helffer and Sj\"ostrand in~\cite{MR1062904} for the isotropic harmonic oscillator $\beta=1$ and $\sigma=\tau=-1/2$ (see also~\cite{MR1061661, MR1079775}). The key idea is to use the Laplace transform to rewrite $R_{\sigma, \tau}^\gamma$ as an integral which opens up for use of the residue theorem. For any $c>0$, 
\begin{align}
  R^\gamma_{\sigma, \tau}(\beta, \lambda) &= \sum_{k\in \N^2}(\lambda-(k_1+\sigma)\sqrt{\beta}-(k_2+\tau)/{\sqrt{\beta}})_\limplus^\gamma\\
  &= \sum_{k\in \N^2}\frac{\Gamma(1+\gamma)}{2\pi i}\int_{c-i\infty}^{c+i\infty}e^{t(\lambda-(k_1+\sigma)\sqrt{\beta}-(k_2+\tau)/{\sqrt{\beta}})}t^{-1-\gamma}\, dt\\
  &=\frac{\Gamma(1+\gamma)}{2\pi i} \int_{c-i\infty}^{c+i\infty}\frac{e^{t(\lambda- \sigma\sqrt{\beta}-\tau/{\sqrt{\beta}})}}{(e^{t\sqrt{\beta}}-1)(e^{t/{\sqrt{\beta}}}-1)}t^{-1-\gamma}\, dt.
\end{align}
The integrand in the last expression is a meromorphic function of $t$ outside of $(-\infty, 0]$, with poles at $t=2\pi i k\sqrt{\beta}$ and $t=2\pi i l/{\sqrt{\beta}}$, for $k, l\in \Z\setminus\{0\}$. If $\beta$ is irrational all of these poles are simple. If $\beta\in \Q$ say $\beta=\tfrac\mu\nu$, with $\gcd(\mu, \nu)=1$, then there are degree-two poles whenever $k, l$ are related by $\mu k = \nu l$. The remaining poles remain simple. That is, degree-two poles at $t=2\pi i \sqrt{\mu \nu}\, m$ for $m\in \Z\setminus\{0\}$, 
and simple poles at $t=2\pi i \sqrt{\frac{\mu}{\nu}} k_1$ and $t=2\pi i \sqrt{\frac{\nu}{\mu}} k_2$ for $k_1, k_2\in \Z\setminus\{0\}$ such that $\beta k_1= \frac{\mu k_1}{\nu}\notin \Z$ and $\frac{k_2}{\beta}=\frac{\nu k_2}{\mu}\notin \Z$.

Letting $f(t)=\frac{e^{t(\lambda- \sigma\sqrt{\beta}-\tau/{\sqrt{\beta}})}}{(e^{t\sqrt{\beta}}-1)(e^{t/{\sqrt{\beta}}}-1)}t^{-1-\gamma}$ and formally using the residue theorem, one would obtain that
\begin{align}\label{eq:Residue+Int}
  R^\gamma_{\sigma, \tau}(\beta, \lambda) &= \Gamma(1+\gamma)\sum_{t\in\P(f)}\Res(f, t) + \frac{\Gamma(1+\gamma)}{2\pi i}\int_{\Gamma_1}f(t)\, dt, 
\end{align}
where $\P(f)$ denotes the poles of $f$ and $\Gamma_1$ is a contour oriented counter-clockwise which encircles the negative real axis but none of the poles of $f$. However, to make this rigorous we need that the sum of residues is absolutely convergent. We shall prove  that this is the case when $\beta\in \Q_\limplus$ but possibly not when $\beta \notin \Q_\limplus$.

It is no big surprise that the contributions to the asymptotic expansion coming from the residues is the most complicated part to analyse. It is this part which accounts for the oscillatory terms in the expansion and the number theoretic dependence on $\beta$. In contrast the integral over the contour $\Gamma_1$ has an asymptotic expansion in $\lambda$ to arbitrary order as $\lambda$ tends to infinity.

The proof will be split into two parts, first treating $\beta\in \Q_\limplus$ and then $\beta \notin \Q_\limplus$. Much of the work done in the first case will turn out to be useful also in the second.

\subsection{Rational \texorpdfstring{$\beta$}{beta}}

In this case it turns out that the use of the residue theorem above is justified. This will be verified once we prove that the sum of residues is absolutely convergent. However, we begin by studying the non-oscillatory part of the expansion, that is, the contribution from the contour integral in~\eqref{eq:Residue+Int}.

\subsection*{Non-oscillatory part}
Let $\eps \in (0, \min\{\sqrt{\beta}, 1/{\sqrt{\beta}}\}]$, and let $\Gamma_1=\Gamma_\limminus \cup \Gamma_0 \cup \Gamma_\limplus$ with 
\begin{align}
  \Gamma_\limpm &= (-\infty\pm i0, -\eps \pm i0], \\
  \Gamma_0&=\eps e^{i\theta}, \quad \theta \in (-\pi, \pi).
\end{align}
For $\lambda > \sigma\sqrt{\beta}+\tau/{\sqrt{\beta}}$ and any $\eps\in (0, 1)$, we see that 
\begin{equation}
  \Bigl|\int_{\Gamma_\pm}f(t)\, dt\Bigr| \leq \frac{e^{\eps(\sigma \sqrt{\beta}+\tau/{\sqrt{\beta}})}}{\gamma (e^{-\sqrt{\beta}}-1)(e^{-1/{\sqrt{\beta}}}-1)} e^{-\eps\lambda}\eps^{-2-\gamma}.
\end{equation}

Returning to the integral over $\Gamma_0$, 
\begin{equation}
  \int_{\Gamma_0} f(t)\, dt = \int_{\Gamma_0}\frac{e^{t\lambda}}{ t^{3+\gamma}} \frac{t^2 e^{-t(\sigma\sqrt{\beta}+\tau/{\sqrt{\beta}})}}{(e^{t\sqrt{\beta}}-1)(e^{t/{\sqrt{\beta}}}-1)}\, dt.
\end{equation}
For small enough $\eps$ and any $M\in \N$ we have a uniform expansion
\begin{equation}
  \frac{t^2 e^{-t(\sigma\sqrt{\beta}+\tau/{\sqrt{\beta}})}}{(e^{t\sqrt{\beta}}-1)(e^{t/{\sqrt{\beta}}}-1)}= \sum_{k=0}^{M-1} a_k(\beta, \sigma, \tau) t^k + O(t^M), 
\end{equation}
where the implicit constant is uniform for $\beta$ in compact subsets of $\R_\limplus$. The $a_k(\beta, \sigma, \tau)$ are explicitly given by
\begin{equation}
  a_k(\beta, \sigma, \tau)=\sum_{l=0}^k \frac{(-1)^l}{l!}(\sigma\sqrt{\beta}+\tau/{\sqrt{\beta}})^lb_{k-l}(\beta), 
\end{equation}
where the $b_k(\beta)$ are the coefficients in the expansion 
\begin{equation}
  \frac{t^2}{(e^{t\sqrt{\beta}}-1)(e^{t/{\sqrt{\beta}}}-1)}= \sum_{k=0}^{M-1} b_k(\beta) t^k + O(t^M).
\end{equation}
The first few coefficients are given by
\begin{align}
  b_0(\beta) &= 1, &
  b_1(\beta) &=-\frac{1+\beta}{2 \sqrt{\beta }}, &
  b_2(\beta) &=\frac{1+3\beta+\beta^2}{12 \beta }, \\
  b_3(\beta) &=-\frac{1+\beta}{24 \sqrt{\beta }}, &
  b_4(\beta) &=-\frac{1-5\beta^2+\beta ^4}{720 \beta ^2}, &
  b_5(\beta) &=\frac{1+\beta ^3}{1440 \beta ^{3/2}}.
\end{align}

Thus we find that
\begin{align}
  \int_{\Gamma_0}f(t)\, dt 
  &= 
  \sum_{k=0}^{M-1}a_k(\beta, \sigma, \tau)\int_{\Gamma_0}e^{t \lambda}t^{k-3-\gamma}\, dt+ e^{\eps \lambda}O(\eps^{M-2-\gamma})\\
  &=
  \sum_{k=0}^{M-1}a_k(\beta, \sigma, \tau)\int_{\Gamma_1}e^{\lambda t}t^{k-3-\gamma}\, dt+ e^{\eps \lambda}O(\eps^{M-2-\gamma})+ e^{-\eps\lambda}O(\eps^{-2-\gamma}), 
\end{align}
where we used that
\begin{equation}
  \int_{\eps}^\infty e^{-\lambda t}t^{k-3-\gamma}\, dt \leq \sup_{t\geq\eps} (e^{-\lambda t}t^{k})\int_{\eps}^\infty t^{-3-\gamma}\, dt = \frac{e^{-\eps \lambda} \eps^{k-2-\gamma}}{2+\gamma}, 
\end{equation}
provided $\eps \lambda\geq k$.

Recall Hankel's integral representation for the reciprocal $\Gamma$ function~\cite[eq.~5.9.2]{NIST}:
\begin{equation}
  \frac{1}{\Gamma(z)}= \frac{1}{2\pi i}\int e^{t}t^{-z}\, dt, 
\end{equation}
where the integral is over a contour which encircles the origin in the positively oriented direction, beginning and returning to $-\infty$ while respecting the branch cut along the negative real axis. By a change of variables we find that $\int_{\Gamma_1} e^{\lambda t} t^{k-3-\gamma} \, dt= \frac{2\pi i \lambda^{2+\gamma-k}}{\Gamma(3+\gamma-k)}$.

Therefore we conclude that
\begin{align}
  \frac{\Gamma(1+\gamma)}{2\pi i}\int_{\Gamma_1}f(t)\, dt &= \sum_{k=0}^{M-1}a_k(\beta, \sigma, \tau)\frac{\Gamma(1+\gamma)}{\Gamma(3+\gamma-k)}\lambda^{2-k+\gamma}\\
  &\quad +e^{-\eps \lambda}O(\eps^{-2-\gamma})+e^{\eps \lambda}O(\eps^{M-2-\gamma}).
\end{align}
Choose $\eps=\eps(\lambda)$ to solve $e^{-\lambda \eps}=\eps^{M/2}$. For large enough $\lambda$ this choice satisfies the requirements above and the error terms become
\begin{equation}
  O(\eps(\lambda)^{M/2-2-\gamma})= o(\lambda^{-M/2+2+\gamma+\delta}), \quad \forall \delta>0, 
\end{equation}
since $\eps(\lambda)=O(\log(\lambda)/\lambda)=o(\lambda^{-1+\delta})$ for any $\delta>0$.

Moving unnecessary parts into the error term, we have for any $M'\in \N$ and $\delta>0$ that
\begin{equation}
  \frac{\Gamma(1+\gamma)}{2\pi i}\int_{\Gamma_1}f(t)\, dt 
  = \sum_{k=0}^{M'+1} \alpha_{k}\lambda^{2-k+\gamma}
      +o(\lambda^{-M'+\gamma+\delta}), 
\end{equation}
where
\begin{align}\label{eq:alpha_coefficients}
  \alpha_k(\beta, \sigma, \tau, \gamma) = a_k(\beta, \sigma, \tau)\frac{\Gamma(1+\gamma)}{\Gamma(3+\gamma-k)}.
\end{align}

\subsection*{Oscillatory part}
We now turn our attention to the sum of residues
\begin{equation}
  \Gamma(1+\gamma)\sum_{t\in\P(f)}\Res(f, t).
\end{equation}

\subsubsection*{Simple poles} If $t=2\pi i k\sqrt{\beta}$, with $k\in \Z\setminus\{0\}$ such that $\beta k \notin \Z$, then it is straightforward to calculate the residue of $f$ at $t$, yielding:
\begin{equation}
  \Res(f, 2\pi i k\sqrt{\beta})= 
  \beta^{-\gamma/2}\frac{e^{2\pi i k(\lambda\sqrt{\beta}-\sigma\beta -\tau)}}{(2\pi i k)^{1+\gamma}(e^{2\pi i k\beta}-1)}.
\end{equation}
If instead $t=2\pi i k/{\sqrt{\beta}}$, with $k\in \Z\setminus\{0\}$ such that $k/\beta \notin \Z$, then an almost identical calculation leads to:
\begin{equation}
  \Res(f, 2\pi i k/{\sqrt{\beta}}) = 
  \beta^{\gamma/2}\frac{e^{2\pi i k(\lambda/{\sqrt{\beta}}-\sigma-\tau/\beta)}}{(2\pi i k)^{1+\gamma}(e^{2\pi i k/\beta}-1)}.
\end{equation}

Let $x_1=\lambda \sqrt{\beta}-\sigma\beta-\tau$ and $x_2=\lambda/{\sqrt{\beta}}-\sigma-\tau/\beta$. Combining the contributions from $k$ and $-k$ one obtains that
\begin{align}
  \sum_{t\in \P_1} \Res(f, t) 
  &=
  \frac{\beta^{-\gamma/2}}{(2\pi)^{1+\gamma}}\sum_{\substack{k\in \N\\ \beta k \notin \N}} \frac{1}{k^{1+\gamma}}
  \biggl(\frac{e^{2\pi i k x_1}}{e^{i\pi(1+\gamma)/2}(e^{2\pi i k \beta}-1)}
  + 
  \frac{e^{-2\pi i k x_1}}{e^{-i\pi(1+\gamma)/2}(e^{-2\pi i k \beta}-1)}
  \biggr) \\
  &+
   \frac{\beta^{\gamma/2}}{(2\pi)^{1+\gamma}}\sum_{\substack{k\in \N\\ k/\beta \notin \N}} \frac{1}{k^{1+\gamma}}
   \biggl(\frac{e^{2\pi i k x_2}}{e^{i\pi(1+\gamma)/2}(e^{2\pi i k/\beta}-1)}
  + 
  \frac{e^{-2\pi i k x_2}}{e^{-i\pi(1+\gamma)/2}(e^{-2\pi i k/\beta}-1)}
  \biggr)\\[-12pt]
  &=
  \frac{\beta^{-\gamma/2}}{(2\pi)^{1+\gamma}}\sum_{\substack{k\in \N\\ \beta k \notin \N}} \frac{1}{k^{1+\gamma}}
  \biggl(
  \frac{\sin(\pi k (2x_1-\beta)-\frac{\pi}{2}(1+\gamma))}{\sin(\pi k\beta)}
  \biggr)\\
  &+
  \frac{\beta^{\gamma/2}}{(2\pi)^{1+\gamma}}\sum_{\substack{k\in \N\\ k/\beta \notin \N}} \frac{1}{k^{1+\gamma}}
  \biggl(
  \frac{\sin(\pi k (2 x_2-1/\beta)-\frac{\pi}{2}(1+\gamma))}{\sin(\pi k/\beta)}
  \biggr), 
\end{align}
where $\P_1$ denotes the simple poles of $f$.

Let $\beta=\tfrac\mu\nu$, with $\gcd(\mu, \nu)=1$, we shall show that the first of the above series is absolutely convergent, the second can be treated identically. Since $\gcd(\mu, \nu)=1$ we have that $\frac{\mu}{\nu}k\notin\N$ if and only if $\frac{k}{\nu}\notin \N$. We find that
\begin{align}
  \frac{\nu^{\gamma/2}}{\mu^{\gamma/2}}\sum_{\substack{k\in \N\\ k/\nu \notin \N}} 
  \Bigl|\frac{\sin(\pi k(2x_1-\mu/\nu)-\frac{\pi}{2}(1+\gamma))}{k^{1+\gamma}\sin(\pi k \mu/\nu)}\Bigr|
  &\leq 
  \frac{\nu^{\gamma/2}}{\mu^{\gamma/2}}\sum_{\substack{k\in \N\\ k/\nu \notin \N}} \frac{1}{k^{1+\gamma}}\frac{1}{|{\sin(\pi k \mu/\nu)}|}\\
  &=
  \frac{\nu^{\gamma/2}}{\mu^{\gamma/2}}\sum_{l=1}^{\nu-1}\sum_{j=0}^{\infty}\frac{1}{(j\nu + l)^{1+\gamma}}\frac{1}{|{\sin(\pi l\mu/\nu)}|}\\
  &\leq
  \frac{\nu^{\gamma/2}}{\mu^{\gamma/2}}\sum_{l=1}^{\nu-1}\frac{1}{|{\sin(\pi l\mu/\nu)}|}\biggl(\frac{1}{l^{1+\gamma}}+\sum_{j=1}^{\infty}\frac{1}{(j\nu)^{1+\gamma}}\biggr)\\[-14pt]
  &=
  \frac{\nu^{\gamma/2}}{\mu^{\gamma/2}}\sum_{l=1}^{\nu-1}\frac{1}{|{\sin(\pi l\mu/\nu)}|}\biggl(\frac{1}{l^{1+\gamma}}+\frac{\zeta(1+\gamma)}{\nu^{1+\gamma}}\biggr), 
\end{align}
implying that the series is absolutely convergent.

\subsubsection*{Degree-two poles}

When $\beta=\tfrac\mu\nu$ then $f$ has poles of degree two at $t=2\pi i \sqrt{\mu \nu}\, k$, for $k\in \Z\setminus \{0\}$. The residues at these poles can be calculated:
\begin{equation}
   \Res(f, 2\pi i \sqrt{\mu \nu}\, k) 
  = 
    \frac{ e^{2 i \pi  k (\lambda  \sqrt{\mu  \nu }-\mu  \sigma -\nu  \tau )} (\gamma -2 \pi i k (\lambda  \sqrt{\mu  \nu }- \mu  (\sigma +\tfrac12) - \nu (\tau +\tfrac12) )+1)}{(2\pi)^{\gamma +2}(i k)^{\gamma } k^2 (\mu  \nu)^{1+\gamma/2} }.
\end{equation}
It is clear that the sum of these residues is absolutely convergent, which validates our use of the residue theorem in~\eqref{eq:Residue+Int} in the case of rational $\beta$. 

Letting $x_3=\{\sqrt{\mu \nu}\, \lambda-\mu \sigma-\nu\tau\}$ we find that
\begin{align}
  \sum_{t\in \P_2}& \Res(f, t) 
  =
  -\frac{(1+\gamma)}{(\mu \nu)^{1+\gamma/2}}\biggl[e^{-\frac{i\pi}{2}(2+\gamma)}\sum_{k=1}^\infty 
      \frac{e^{2\pi i k x_3}}{(2\pi k)^{2+\gamma}}+e^{\frac{i\pi}{2}(2+\gamma)}\sum_{k=1}^\infty \frac{e^{2\pi i k (1-x_3)}}{(2\pi k)^{2+\gamma}}\biggr]\\
  &\ +
  \frac{\lambda \sqrt{\mu \nu}-\mu\bigl(\sigma+\tfrac12\bigr)-\nu\bigl(\tau+\tfrac12\bigr)}{(\mu \nu)^{1+\gamma/2}}
  \biggl[
  e^{-\frac{i\pi}{2}(1+\gamma)}\sum_{k=1}^\infty \frac{e^{2\pi i kx_3}}{(2\pi k)^{1+\gamma}}+e^{\frac{i\pi}{2}(1+\gamma)}\sum_{k=1}^\infty \frac{e^{2\pi i k(1-x_3)}}{(2\pi k)^{1+\gamma}}
  \biggr]\\
   &\hspace{42.5pt}
   =
  \frac{\lambda \zeta(-\gamma, x_3)-\zeta(-1-\gamma, x_3)/\!{\sqrt{\mu\nu}}
  -\bigl(\bigl(\sigma+\tfrac12\bigr){\scriptstyle\sqrt{\tfrac\mu\nu}}+\bigl(\tau+\tfrac12\bigr)/\!{\scriptstyle\sqrt{{\tfrac\mu\nu}}}\bigr)
  \zeta(-\gamma, x_3)}{(\mu \nu)^{(1+\gamma)/2}\Gamma(1+\gamma)}, \\[-30pt]
\end{align}
where we made use of~\cite[eq.~25.12.13]{NIST}, and $\P_2$ denotes the set of degree-two poles of $f$.

\subsection{Irrational \texorpdfstring{$\beta$}{beta}}

For $\beta\in \R_\limplus{\setminus}\Q$ the calculation leading to the precise asymptotic expansion of $R^\gamma_{\sigma, \tau}$ is slightly more complicated. The complication stems from the fact that we do not know if the sum of residues in~\eqref{eq:Residue+Int} is absolutely convergent. Hence we cannot justify our use of the residue theorem as above. However, by choosing a $\lambda$-dependent contour where we only use the residue theorem for bounded contours one can obtain the desired result.

Fix $\lambda > \sigma \sqrt{\beta}+\tau/{\sqrt{\beta}}$. From the residue theorem we find that for $c>0$ and $\Lambda >1$ to be chosen later
\begin{align}
  R^\gamma_{\sigma, \tau}(\beta, \lambda) 
  &=
  \frac{\Gamma(1+\gamma)}{2\pi i}\int_{c-i\infty}^{c+i\infty} f(t)\, dt\\
  &=
  \frac{\Gamma(1+\gamma)}{2\pi i}\Biggl(\int_{\Gamma_0} f(t)\, dt 
  + \int_{\Gamma_{\Lambda, \infty}^\limpm} f(t)\, dt
   + \int_{\Gamma_{\eps, \Lambda}^\limpm} f(t)\, dt
   + \int_{\Gamma_{\eps, c}^\limpm} f(t)\, dt\Biggr)\\
  &\quad + \Gamma(1+\gamma)\sum_{\substack{t\in \P(f)\\ |{\Im(t)}|\in (0, \, \Lambda)}} \Res(f, t), 
\end{align}
where $\Gamma_0$, $\eps$ are as before and
\begin{align}
  &\Gamma_{\Lambda, \infty}^\limpm = (c\pm i \Lambda, c\pm i\infty), \\
  &\Gamma_{\eps, \Lambda}^\limpm = (-\eps\pm i0, -\eps \pm i \Lambda), \\
  &\Gamma_{\eps, c}^\limpm = (-\eps\pm i \Lambda, c\pm i \Lambda).
\end{align}

The integral over $\Gamma_0$ can be computed precisely as in the case of rational $\beta$:
\begin{align}
   \frac{\Gamma(1+\gamma)}{2\pi i} \int_{\Gamma_0} f(t)\, dt 
   &=
   \sum_{k=0}^{M+1} \alpha_{k}\lambda^{2-k+\gamma}
       +o(\lambda^{-M+\gamma+\delta}), 
\end{align}
for any $M\in \N, $ $\delta>0$.

There are now only simple poles, the residues at which can be calculated as before:
\begin{align}
   \sum_{\substack{t\in \P(f)\\ |{\Im(t)}|\in (0, \, \Lambda)}} \hspace{-10pt}\Res(f, t)
   &=
   \frac{\beta^{-\gamma/2}}{(2\pi)^{1+\gamma}}
     \sum_{\substack{k\in \N\\ 2\pi k\sqrt{\beta}<\Lambda}}
  \frac{\sin(\pi k (2\lambda \sqrt{\beta}-(1+2\sigma) \beta-2\tau)- \frac{\pi}{2}(1+\gamma))}{k^{1+\gamma}\sin(\pi k \beta)} \\
  &+
   \frac{\beta^{\gamma/2}}{(2\pi)^{1+\gamma}}
   \sum_{\substack{k\in \N\\ 2\pi k/{\sqrt{\beta}}<\Lambda}} 
   \frac{\sin(\pi k (2\lambda/{\sqrt{\beta}}-2\sigma-(1+2\tau)/\beta)- \frac{\pi}{2}(1+\gamma))}{k^{1+\gamma}\sin(\pi k/\beta)}.
\end{align} 

Moreover, 
\begin{align}
  \biggl|\int_{\Gamma_{\Lambda, \infty}^\limpm} f(t)\, dt \biggr| 
  &\leq
  \frac{e^{c(\lambda-\sigma\sqrt{\beta}-\tau/{\sqrt{\beta}})}}{(e^{c\sqrt{\beta}}-1)(e^{c/{\sqrt{\beta}}}-1)}\int_{c\limpm i\Lambda}^{c\limpm i\infty} |t|^{-1-\gamma}\, dt \\
  &\leq
  \frac{e^{c(\lambda-\sigma\sqrt{\beta}-\tau/{\sqrt{\beta}})}}{c^2}\int_{\Lambda}^{\infty} t^{-1-\gamma}\, dt \\
  &= \frac{e^{c(\lambda-\sigma\sqrt{\beta}-\tau/{\sqrt{\beta}})}}{\gamma c^2}\Lambda^{-\gamma}, 
\end{align}
since $e^{x}-1\geq x$, for $x\geq 0$. Furthermore, 
\begin{align}
  \biggl| \int_{\Gamma_{\eps, \Lambda}^\pm} f(t)\, dt\biggr|
  &\leq
  \frac{e^{-\eps(\lambda-\sigma\sqrt{\beta}-\tau/{\sqrt{\beta}})}}{(1-e^{-\eps\sqrt{\beta}})(1-e^{-\eps/{\sqrt{\beta}}})}\int_{0}^{\Lambda}(t^2+\eps^2)^{-(1+\gamma)/2}\, dt\\
  &\leq
  \frac{4e^{-\eps(\lambda-\sigma\sqrt{\beta}-\tau/{\sqrt{\beta}})}}{\eps^2}\eps^{-\gamma} \int_{0}^{\frac{\Lambda}{\sqrt{\Lambda^2+\eps^2}}}(1-z^2)^{-1+\gamma/2}\, dz\\
  &\leq \frac{2\sqrt{\pi}\, \Gamma(\tfrac{\gamma}2)}{\Gamma(\tfrac{1+\gamma}2)}e^{-\eps(\lambda-\sigma\sqrt{\beta}-\tau/{\sqrt{\beta}})}\eps^{-2-\gamma}\\
  &= o(\lambda^{-M+\gamma+\delta}), 
\end{align}
where we used that $1-e^{-x}\geq x/2$, for $x\geq0$, and the change of variables $t=\frac{\eps z}{\sqrt{1-z^2}}$.

Finally, for the two last segments of the contour we firstly have that by changing $\Lambda$ by something smaller than $2\pi\min\{\sqrt{\beta}, 1/{\sqrt{\beta}}\}$ we can choose $\Lambda$ so that $\dist(i\Lambda, \P(f)) \geq \frac{\pi}{2} \min\{\sqrt{\beta}, 1/{\sqrt{\beta}}\}$, that is $\dist(\Lambda, 2\pi\sqrt{\beta}\Z \cup 2\pi/{\sqrt{\beta}}\Z)\geq \frac{\pi}{2} \min\{\sqrt{\beta}, 1/{\sqrt{\beta}}\}$. Hence 
\begin{align}
 &\dist(\Lambda\sqrt{\beta}, 2\pi\Z)\geq \dist(\Lambda\sqrt{\beta}, 2\pi\beta\Z \cup 2\pi\Z)\geq \tfrac{\pi}{2} \sqrt{\beta}\min\{\sqrt{\beta}, \tfrac1{\sqrt{\beta}}\}= \tfrac\pi2 \min\{\beta, 1\}, \\
 &\dist(\tfrac\Lambda{\sqrt{\beta}}, 2\pi\Z)\geq \dist(\tfrac\Lambda{\sqrt{\beta}}, 2\pi\Z \cup \tfrac{2\pi}\beta\Z)\geq \tfrac\pi{2\sqrt{\beta}}\min\{\sqrt{\beta}, \tfrac1{\sqrt{\beta}}\} = \tfrac{\pi}{2} \min\{1, \tfrac1\beta\}.
\end{align}
For $\Re(z)\geq -\log(2)$, 
\begin{equation}
  |e^{z}-1|^2= e^{2\Re(z)}-2e^{\Re(z)}\cos(\Im(z))+1\geq  1- \cos(\Im(z))\geq \frac{2}{\pi^2}\dist(\Im(z), 2\pi \Z)^2.
\end{equation}
Here the first inequality follows from that $g(x, y)=e^{2x}-(2e^x-1)\cos(y)$ is non-negative when $x\geq -\log(2)$. Indeed, if $\cos(y)\leq 0$ this is clearly the case, and if $\cos(y)\geq 0$ this can be seen by writing $g$ as $(e^x-\cos(y))^2+(1-\cos(y))\cos(y)$.

For $t\in\Gamma_{\eps, c}^\limpm$, we thus have that
$|(1-e^{t\sqrt{\beta}})(1-e^{t/{\sqrt{\beta}}})|\geq \frac{1}{2}\min\{\beta, 1\}\min\{1, 1/\beta\}= \frac{1}{2}\min\{\beta, 1/\beta\}.$ Therefore
\begin{align}
  \biggl|\int_{\Gamma_{\eps, c}^\pm} f(t)\, dt\biggr|
  &\leq \frac{2e^{c(\lambda-\sigma\sqrt{\beta}-\tau/{\sqrt{\beta}})}}{ \min\{\beta, 1/\beta\}} \int_{-\eps\limpm i\Lambda}^{c \limpm i \Lambda} |t|^{-1-\gamma}\, dt\\
  &\leq 
  \frac{2e^{c(\lambda-\sigma\sqrt{\beta}-\tau/{\sqrt{\beta}})}}{ \min\{\beta, 1/\beta\}} \Lambda^{-1-\gamma}(c+\eps).
\end{align}

What remains is to choose $\Lambda, c$ appropriately. If $c= O(\lambda^{-\alpha})$ and $\Lambda=O(\lambda^{\beta})$, for some $\alpha\geq 1$ and $\beta>0$, then the errors are of orders
\begin{equation}
  (\lambda^{-\alpha}+\log(\lambda)\lambda^{-1})\lambda^{-\beta(1+\gamma)}\sim\log(\lambda)\lambda^{-1-\beta(1+\gamma)}, \qquad \lambda^{2\alpha-\beta\gamma}, \qquad \lambda^{-M+\gamma+\delta}.
\end{equation}
The errors contributing are thus only the last two. Hence larger $\alpha$ only makes things worse so we choose $\alpha=1$, and $\beta$ so that $2-\beta \gamma = -M+\gamma, $ that is $\beta= \frac{M+2-\gamma}{\gamma}$. This choice yields the desired expansion with the claimed remainder term $o(\lambda^{-M+\gamma+\delta})$, for any $\delta>0$.

\subsection{Bounding \texorpdfstring{$\osc(\beta, \lambda)$}{Osc(beta, lambda)}}\label{sec:Bounding_Osc}
The only remaining part to complete the proof of Theorem~\ref{thm:asymptotics} is to prove that the sum of oscillatory terms is $O(\lambda)$ uniformly for $\beta$ in compact subsets of~$\R_\limplus$.
To this end we make use of the following one-dimensional asymptotic expansion:

\begin{lemma}[{\cite[Lemma~2.1]{MR1062904}}]\label{lem:1Dsum}
  For $\gamma >0$ we have an expansion
  \begin{equation}
    \sum_{k=1}^\infty (\lambda-k)^\gamma_\limplus = 
    \sum_{k=0}^{\lceil \gamma \rceil} \rho_k(\gamma) \lambda^{1+\gamma-k}+O(1), 
  \end{equation}
  as $\lambda \to \infty$. 
\end{lemma}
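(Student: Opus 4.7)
The plan is to specialize the Mellin--Laplace contour method developed in the proof of Theorem~\ref{thm:asymptotics} to one dimension. For any $c>0$, the inverse Laplace identity
\begin{equation}
(\lambda-k)^\gamma_\limplus = \frac{\Gamma(1+\gamma)}{2\pi i}\int_{c-i\infty}^{c+i\infty} e^{t(\lambda-k)}t^{-1-\gamma}\, dt
\end{equation}
lets us recast the sum as
\begin{equation}
\sum_{k=1}^\infty (\lambda-k)^\gamma_\limplus = \frac{\Gamma(1+\gamma)}{2\pi i}\int_{c-i\infty}^{c+i\infty} \frac{e^{t\lambda}}{t^{1+\gamma}(e^t-1)}\, dt,
\end{equation}
with the interchange justified by absolute convergence since $c>0$. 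The integrand is meromorphic off $(-\infty, 0]$ with simple poles at $t=2\pi i k$, $k\in \Z\setminus\{0\}$.

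Following the irrational-$\beta$ argument in Section~\ref{sec:Proof_asymptotics}, I would deform the Bromwich contour into the union of a Hankel loop $\Gamma_0\cup\Gamma_\pm$ around $(-\infty, 0]$ together with horizontal bridges at heights $\pm \Lambda$, picking up the residues at poles with $|{\Im t}|<\Lambda$; sending $\Lambda\to\infty$ along a sequence staying a fixed distance away from $2\pi\Z$ makes the bridge integrals vanish. For the Hankel part I would insert the Laurent expansion $t/(e^t-1)=\sum_{j\geq 0}\tfrac{B_j}{j!}t^j$, valid for $|t|<2\pi$, and apply Hankel's representation $1/\Gamma(z)=\tfrac{1}{2\pi i}\int_{\Gamma_1} e^{t}t^{-z}\, dt$ (after rescaling $t\mapsto t/\lambda$) term by term. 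This produces the polynomial contribution with coefficients
\begin{equation}
\rho_j(\gamma)=\frac{\Gamma(1+\gamma)\,B_j}{j!\,\Gamma(2+\gamma-j)}.
\end{equation}
Truncating at $j=\lceil\gamma\rceil$ absorbs every omitted polynomial term (whose exponent $1+\gamma-j$ is $\leq 0$) into $O(1)$.

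Pairing $\pm k$ contributions, the sum of residues contributes
\begin{equation}
\frac{2\Gamma(1+\gamma)}{(2\pi)^{1+\gamma}}\sum_{k=1}^\infty \frac{\cos\bigl(2\pi k \lambda -\tfrac{\pi}{2}(1+\gamma)\bigr)}{k^{1+\gamma}},
\end{equation}
which converges absolutely precisely because $\gamma>0$ and is therefore uniformly $O(1)$ in $\lambda$, so this entire oscillatory piece is swallowed by the claimed remainder.

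The main obstacle is familiar from Section~\ref{sec:Proof_asymptotics}: the residues are not absolutely summable before pairing $\pm k$, so the contour deformation must be executed through finite truncations at height $\Lambda$, with $\Lambda$ chosen bounded away from the imaginary poles $2\pi\Z$ so that the horizontal-bridge integrals decay sufficiently fast. These estimates are a direct analogue of, and simpler than, those carried out for irrational $\beta$ in Section~\ref{sec:Proof_asymptotics}, since only a single factor $(e^t-1)^{-1}$ is present in the denominator.
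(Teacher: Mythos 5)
The paper gives no proof of this lemma: it is quoted directly from \cite[Lemma~2.1]{MR1062904}. Your plan---rewrite the sum as a Bromwich integral, deform around $(-\infty,0]$, collect residues at $2\pi i\Z\setminus\{0\}$---is the natural one-dimensional specialization of the method used in Section~\ref{sec:Proof_asymptotics}, and the resulting formula $\rho_j(\gamma)=\frac{\Gamma(1+\gamma)B_j}{j!\,\Gamma(2+\gamma-j)}$ is correct. Two steps in your write-up, however, are not right as stated.

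First, the ``main obstacle'' paragraph is backwards: in one dimension the residues \emph{are} absolutely summable before pairing, since $\bigl|\Res\bigl(e^{t\lambda}t^{-1-\gamma}(e^t-1)^{-1},\,2\pi i k\bigr)\bigr|=(2\pi|k|)^{-1-\gamma}$ and $\gamma>0$. That is exactly why the one-dimensional case is easier than the irrational-$\beta$ case of Section~\ref{sec:Proof_asymptotics}, where the extra factor $|e^{2\pi i k\beta}-1|^{-1}$ ruins absolute summability and forces the truncation $\Lambda(\lambda)$. Here you may simply let $\Lambda\to\infty$ at fixed $\lambda$ (along heights bounded away from $2\pi\Z$, as you say), and the full residue sum plus the complete Hankel integral emerges cleanly, as in~\eqref{eq:Residue+Int}.

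Second, ``insert the Laurent expansion $\ldots$ and apply Hankel's representation $\ldots$ term by term'' is not justified as written: $\sum_{j\geq0}\frac{B_j}{j!}t^j$ converges only for $|t|<2\pi$, whereas the Hankel contour runs out to $-\infty$, so the infinite series cannot be exchanged with the integral over the whole loop. You must truncate. One option is to copy the paper's device from the proof of Theorem~\ref{thm:asymptotics}: take a Hankel loop of radius $\eps=\eps(\lambda)$ solving $e^{-\eps\lambda}=\eps^{M/2}$ and estimate the finite-Taylor remainder and the two tails along $(-\infty,-\eps]$. A cleaner alternative in 1D: write $\frac{t}{e^t-1}=\sum_{j<M}\frac{B_j}{j!}t^j+t^M g_M(t)$ with $g_M$ holomorphic near $(-\infty,0]$ and bounded there (it is $O(1)$ at $0$ and behaves like $-t^{1-M}$ as $t\to-\infty$); then for $M>1+\gamma$ the remainder integral
\begin{equation}
  \int_{\Gamma_1} e^{t\lambda}\,t^{M-2-\gamma}g_M(t)\,dt
\end{equation}
is absolutely convergent, the small circle may be shrunk to zero, and the resulting line integral along the cut is $O\bigl(\Gamma(M-1-\gamma)\lambda^{1+\gamma-M}\bigr)=o(1)$. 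Either repair closes the gap; with it, your argument does prove the lemma.
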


Using Lemma~\ref{lem:1Dsum} we find that
\begin{align}
  R^\gamma_{\sigma, \tau}(\beta, \lambda)
  &=
  \sum_{k\in \N^2}(\lambda-(k_1+\sigma)\sqrt{\beta}-(k_2+\tau)/{\sqrt{\beta}})_\limplus^\gamma\\
  &=
  \frac{1}{\beta^{\gamma/2}}\sum_{k_1= 1}^{\lf\lambda/{\sqrt{\beta}}-\tau/\beta-\sigma\rf}
  \Bigl(
    \sum_{n=0}^{\lceil 1+\gamma \rceil} \rho_n(\gamma) (\sqrt{\beta}\lambda-\tau-(k_1+\sigma)\beta)^{1+\gamma-n} + O(1)
  \Bigr)\\
  &=
  \sum_{n=0}^{\lceil 1+\gamma \rceil} 
  \Bigl(
  \beta^{1-n+\gamma/2}\rho_n(\gamma) 
    \sum_{k_1\geq 1}(\lambda/{\sqrt{\beta}}-\tau/\beta-\sigma-k_1)_\limplus^{1+\gamma-n}
  \Bigr)+ O(\lambda)\\
  &=
  \sum_{\substack{n, m\geq 0\\m+n< 2+\gamma}}\beta^{(n+m)/2}\rho_n(\gamma) \rho_{m}(1+\gamma-n) \lambda^{2+\gamma-n-m}\Bigl(1-\frac{\sigma\beta+\tau}{\lambda\sqrt{\beta}}\Bigr)_\limplus^{2+\gamma-n-m}\\
  &\quad + O(\lambda), 
\end{align}
where the error is uniform for $\beta$ on compact subsets of $\R_\limplus$. By expanding the $(1-c/\lambda)^\eta$ terms in the sum up to $O(\lambda^{-1-\gamma+n+m})$ we obtain an asymptotic expansion of $R_{\sigma, \tau}^\gamma$ up to~$O(\lambda)$. Comparing this to the precise asymptotics we obtained above leads us to conclude that the $\osc(\beta, \lambda)=O(\lambda)$ locally uniformly in $\beta$.

\bigskip\noindent{\bf Acknowledgements.} The author is grateful to Didier Robert for providing a copy of the paper~\cite{MR1061661}, and to the anonymous referee whose comments helped improve the quality of the paper and significantly simplify the proof of Theorem~\ref{thm:HeatKernel}. The author also wishes to thank Katie Gittins, Ari Laptev, Richard Laugesen and Douglas Lundholm for discussions and helpful suggestions. Financial support from the Swedish Research Council grant no.~2012-3864 is gratefully acknowledged.


\bibliographystyle{amsplain}
\def\myarXiv#1#2{\href{http://arxiv.org/abs/#1}{\texttt{arXiv:#1\,[#2]}}}

\end{document}